\newtheorem{theorem}{Theorem}[section]
\newtheorem{lemma}[theorem]{Lemma}
\newtheorem{proposition}[theorem]{Proposition}
\newtheorem{definition}[theorem]{Definition}
\newtheorem{exa}[theorem]{Example}
\newenvironment{example}{\begin{exa} \rm}{ \end{exa}}
\newenvironment{remark}{\begin{rem} \rm}{ \end{rem}}
\newtheorem{rem}[theorem]{Remark}
\title[Intrinsic complexity of recursive functions on $(\omega,<)$]{Intrinsic complexity of recursive functions on natural numbers with standard order}
\author[N.~Bazhenov]{Nikolay Bazhenov}
\address{Sobolev Institute of Mathematics, pr. Akad.
Koptyuga 4, Novosibirsk, 630090 Russia
}
\email{bazhenov@math.nsc.ru}
\author[D.~Kalociński]{Dariusz Kalociński}
\address{Institute of Computer Science, Polish Academy of Sciences, ul. Jana Kazimierza 5, 01-248 Warsaw, Poland}\email{dariusz.kalocinski@ipipan.waw.pl}
\author[M.~Wrocławski]{Michał Wrocławski}
\address{Department of Philosophy, University of Warsaw,
ul. Krakowskie Przedmieście 3, 00-927 Warsaw, Poland}
\email{m.wroclawski@uw.edu.pl}
\thanks{Kalociński was supported by the National Science Centre grant no. 2018/31/B/HS1/04018.}
\keywords{computable structure theory, degree spectra, $\omega$-type order, c.e. degrees, $\Delta_2$ degrees}
\subjclass[2010]{03D45}
\begin{document}

\maketitle

%TODO mandatory: add short abstract of the document
\begin{abstract}
Intrinsic complexity of a relation on a given computable structure is captured by the notion of its degree spectrum---the set of Turing degrees of images of the relation in all computable isomorphic copies of that structure. We investigate the intrinsic complexity of unary total recursive functions on nonnegative integers with standard order. According to existing results, possible spectra of such functions include three sets consisting of precisely: the computable degree, all c.e. degrees and all $\Delta_2$ degrees. These results, however, fall far short of the full classification. In this paper, we obtain a more complete picture by giving a few criteria for a function to have intrinsic complexity equal to one of the three candidate sets of degrees. Our investigations are based on the notion of block functions and a broader class of quasi-block functions beyond which all functions of interest have intrinsic complexity equal to the c.e. degrees. We also answer the questions raised by Wright \cite{wright_degrees_2018} and Harrison-Trainor \cite{harrison-trainor_degree_2018} by showing that the division between computable, c.e. and $\Delta_2$ degrees is insufficient in this context as there is a unary total recursive function whose spectrum contains all c.e. degrees but is strictly contained in the $\Delta_2$ degrees.
\end{abstract}

\section{Introduction}\label{sec:introduction}
In mathematics we study structures of various sorts like rings, fields or linear orders. In computability theory we investigate complexity of countable objects. A combination of the two---computable structure theory---examines the relationship between complexity and structure in the above sense \cite{ash_computable_2000, montalban_computable_2021}. One of the main research programs in computable structure theory consists in the study of how complexity of a relation on a given structure behaves under isomorphisms (see, e.g., \cite{richter_degrees_1981,harizanov_degree_1987,hirschfeldt_degree_2000,fokina_comp_model_20l4}). Recall that a structure is computable if its domain and basic relations are uniformly computable. Complexity of a relation might be captured by a measure such as Turing degrees. This leads to the notion of the degree spectrum (of a computable relation on a computable structure)---the set of Turing degrees assumed by the images of that relation in all computable isomorphic copies of that structure. This notion captures what might otherwise be called the intrinsic complexity of a relation. 

A natural motivation for investigating intrinsic complexity comes from treating computable copies of a structure as notations: we regard the elements of the copy as names for the members of the structure, with the underlying isomorphism acting as a naming function. A computable copy of a structure is thus a notation in which all the basic relations are computable (meaning that their images within the copy are computable). This is essentially Shapiro's idea, as studied, though in a very restricted sense, in \cite{shapiro_acceptable_1982}. But this analogy goes further. Shapiro insisted, not without reason, that computations are not performed directly on numbers but rather on their names (using the terminology of computable structure theory: computations are not performed on the underlying structure but on isomorphic copies). This intuition transfers to all computation-dependent notions, including complexity. In the end, the intricate notion of intrinsic complexity boils down to the study of how difficult it is to compute the relation in notations in which all the basic relations are computable.

Following Downey et al. \cite{downey_degree_2009} and Wright \cite{wright_degrees_2018}, we investigate degree spectra on the most common ordering: non-negative integers with the standard \emph{less than} relation, denoted by $(\omega,<)$. We study this question in the restricted setting of specific binary relations of general interest---graphs of unary total computable functions. As an example of how isomorphism might influence the complexity of a such a function, consider the successor. By a well-known result (see, e.g., Example 1.3 in \cite{chubb_degree_2009}), there is an isomorphic copy of $(\omega,<)$ in which the image of the successor computes the halting problem.

Several results from the literature partially characterize degree spectra of such functions. Moses \cite{moses_relations_1986} provided a syntactical characterization of intrinsically computable (i.e. having only the computable degree in their spectrum) $n$-ary relations on $(\omega,<)$. These results imply that a total unary recursive function is intrinsically computable if and only if it is almost constant or almost identity (see, Proposition \ref{theorem:trivial}). In \cite{downey_degree_2009}, Downey, Khoussainov, Miller and Yu examined degree spectra of unary relations on $(\omega,<)$. Their results show, among others, that the spectrum of any infinite coinfinite computable unary relation on $(\omega,<)$ contains all c.e. degrees (Theorem 1.1 in~\cite{downey_degree_2009}).
% is equal to the
%[\textcolor{red}{?}]. 
Wright extended their results by showing the following.

\begin{theorem}[Wright \cite{wright_degrees_2018}]
The spectrum of a computable $n$-ary relation which is not intrinsically computable contains all c.e. degrees.\label{thm:wright}
\end{theorem}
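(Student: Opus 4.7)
The plan is to prove this by a direct coding construction: given a non-intrinsically-computable computable $n$-ary relation $R$ on $(\omega,<)$ and a c.e.\ set $C$ of the target degree, I would build a computable copy $\mathcal{B}$ of $(\omega,<)$ in which the image $R^{\mathcal{B}}$ satisfies $R^{\mathcal{B}} \equiv_T C$.

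The first task is to use Moses' syntactic characterization (referenced in the introduction) to extract \emph{repeatable local flexibility} from $R$. The failure of intrinsic computability means $R$ does not fit Moses' restricted syntactic form, and from this failure I would extract infinitely many pairwise disjoint finite $<$-intervals $I_0 < I_1 < \cdots$ in $\omega$, each carrying two alternative local rearrangements $\pi_n^0, \pi_n^1$ (both $<$-isomorphic to $I_n$ pointwise, differing only by an internal permutation) that disagree on $R$ at some distinguished tuple $\bar t_n \subseteq I_n$. One such window is essentially the witness to non-intrinsic computability; obtaining infinitely many pairwise disjoint copies uses that the violation cannot be localized to any finite portion of $\omega$ (otherwise one could compute $R$ in any copy by handling the finite tail case by case) together with the self-similarity of the tail of $(\omega,<)$.

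With the gadgets $I_n$ in hand, I would construct $\mathcal{B}$ by computably enumerating its order in $\omega$-type. Outside the encoding windows I use the identity arrangement. Within each $I_n$ I commit to $\pi_n^0$ or $\pi_n^1$ according to whether $n$ is later seen to enter $C$. Since $C$ is only c.e., I cannot decide this at the moment I first reach $I_n$; the standard workaround is a delayed-enumeration schedule in which the chunk corresponding to $I_n$ is held back (with padding placed around it from a computable reserve) until the stage where the construction commits, after which we switch in $\pi_n^1$ in place of the default $\pi_n^0$ if $n \in C_s$. Monotonicity of the c.e.\ enumeration is essential, since commitments only ever flip once and in one direction.

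Verification is then routine: $C \leq_T R^{\mathcal{B}}$ by querying $R^{\mathcal{B}}(\bar t_n)$ for each $n$, and $R^{\mathcal{B}} \leq_T C$ by using $C$ to simulate the construction and recover which rearrangement was installed in each $I_n$ (the non-encoding part of $R^{\mathcal{B}}$ being computable outright). The main obstacle I anticipate is the gadget-extraction step: translating Moses' possibly intricate syntactic condition for general $n$-ary relations into a clean lemma supplying infinitely many uniform, pairwise disjoint swap witnesses $(I_n,\pi_n^0,\pi_n^1,\bar t_n)$, computably in $n$. A secondary but genuine technical difficulty is arranging the delayed enumeration so that the resulting $\mathcal{B}$ still has order type exactly $\omega$---no induced infinite descending chain, no permanently deferred element---which I expect to handle by interleaving the commitments with a priority-free scheduling that moves the ``next unplaced element'' forward whenever no window is currently being held open.
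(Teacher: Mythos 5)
First, note that the paper does not prove this statement at all: Theorem~\ref{thm:wright} is quoted from Wright \cite{wright_degrees_2018}, so the only internal points of comparison are the related constructions in Section~\ref{sec:results} (the PtR-module, Remark~\ref{rem:inf-string-01}), which in fact highlight where your sketch breaks down.

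The central gap is your gadget mechanism. In a computable copy of $(\omega,<)$ there is no such thing as choosing between ``two alternative local rearrangements $\pi_n^0,\pi_n^1$ of an interval differing by an internal permutation'': a finite linear order is rigid, so once the elements of a window and their $\prec$-order are fixed, the induced image of $R$ on them is determined by which positions of $\omega$ they end up occupying, and the only degree of freedom you have is to insert fresh elements and push old ones to the right. Consequently, to change the value of $R^{\mathcal B}$ on a distinguished tuple at a later stage you must be able to push that tuple onto new positions where $(\omega,<,R)$ realizes the desired alternative pattern, \emph{while preserving the $R$-values on all previously settled tuples} (including $n$-tuples that straddle your window, the padding, and the rest of the copy). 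This requires that both patterns, and indeed every pattern you have already committed to further right, recur unboundedly often in $(\omega,<,R)$ --- exactly the kind of recurrence statement that Remark~\ref{rem:inf-string-01} and the PtR target conditions encode in the unary case, and which for arbitrary $n$-ary $R$ is the actual content of Wright's theorem. You acknowledge this ``extraction lemma'' as an anticipated obstacle, but deriving it (uniformly, with pairwise disjoint windows, and in the one-directional form needed for a c.e.\ oracle: default pattern switchable to the alternative at an arbitrary later stage without injury) from the mere failure of Moses' quantifier-free characterization is the heart of the proof, not a technicality; nothing in your sketch supplies it. Relatedly, your verification assumes the non-coding part of $R^{\mathcal B}$ is ``computable outright,'' which again is not automatic for $n$-ary relations, since pushing a window moves cross-window tuples as well; without a preservation argument you get neither $R^{\mathcal B}\leq_T C$ nor the clean decoding $C\leq_T R^{\mathcal B}$. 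So as it stands the proposal is a plausible top-level plan with the two essential lemmas (pattern recurrence/switchability, and preservation under pushing) missing.
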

He was also able to show that a computable \emph{unary} relation which is not intrinsically computable has $\Delta_2$ degrees as a spectrum (see, also, \cite{knoll_degree_2009}).

Wright asked in \cite{wright_degrees_2018} whether the computable, the c.e. and the $\Delta_2$ degrees exhaust possible degree spectra for computable $n$-ary relations on $(\omega, <)$. Roughly at about the same time, Harrison-Trainor posed a related question in \cite{harrison-trainor_degree_2018} where he showed that either
\begin{enumerate}
    \item[(1)] there is a computable relation $R$ on $(\omega, <)$ whose degree spectrum strictly contains the c.e. degrees but does not contain all of the $\Delta_2$ degrees, or\label{HT_case1}
    \item[(2)] there is a computable relation $R$ on $(\omega, <)$ whose degree spectrum is all of the $\Delta_2$ degrees but does not have this degree spectrum uniformly.
\end{enumerate}
Harrison-Trainor conjectured that (\ref{HT_case1}) holds for the relation he constructed. We construct a unary total computable function (hence, a computable binary relation) witnessing (\ref{HT_case1}). This also answers Wright's question.

Results of this paper are heavily based on certain structural characteristics of functions, which we refer to as the block and (a weaker) quasi-block property. Intuitively, each block function on $(\omega,<)$ is defined by multiple sub-functions where each sub-function applies to a different finite $<$-interval of $\omega$ (Definition \ref{def:block_function}). A quasi-block function is one for which there are increasingly long initial $<$-segments such that no number from within the segment is sent outside it. The usefulness of these properties is clear in view of the observation that any computable non-quasi-block function has exactly all c.e. degrees as a spectrum (Theorem \ref{theorem:non_quasi_block}). One of the main contributions of the paper consists in the complete characterization of degree spectra of block functions which have at most finitely many isomorphism types of their elementary sub-functions (Theorem \ref{theorem:block-functions}). The second main contribution is Theorem \ref{theorem:unusual_spectrum} which answers Wright's and Harrison-Trainor's questions.

\section{Definitions}
\label{sec:definitions}
\begin{definition}
$(\omega, \prec)$ is a computable copy of $(\omega,<)$ if $\prec$ is a computable ordering on $\omega$ and $(\omega, <)$ and $(\omega, \prec)$ are isomorphic.
\end{definition}
\begin{definition}
Let $R$ be a relation on $(\omega,<)$, i.e. $R \subseteq \omega^k$, for some $k \in \omega$, and let $\mathcal A$ be a computable copy of $(\omega,<)$. If $\varphi$ is an isomorphism from $(\omega,<)$ to $\mathcal A$, we write $R_\mathcal{A}$ for the image of $R$ under $\varphi$. %{\color{brown}If $x \in \omega$, we write $x^\mathcal{A}$ for $\varphi^{-1}(x)$, i.e. the position of $x$ in $\mathcal A$.}
\end{definition}
\begin{definition}
Let $R$ be a relation on a computable copy of $(\omega,<)$. The degree spectrum or spectrum of $R$ on $(\omega,<)$, in symbols $DgSp_{(\omega,<)}(R)$, is the set of Turing degrees of $R_\mathcal{A}$ over all computable copies $\mathcal A$ of $(\omega,<)$. 
\end{definition}
Throughout the article, we use abbreviated forms: \emph{spectrum of $R$} and $DgSp(R)$.

\begin{definition}
Let $R$ be a relation on $(\omega,<)$. The relation $R$ is intrinsically computable if $DgSp(R)$ contains only the computable degree.
\end{definition}

%{\color{blue}
Let $\mathcal{A} = (A, <_{\mathcal{A}})$ be a linear order. If $a\leq_{\mathcal{A}} b$, then $[a;b]_{\mathcal{A}}$ and $[a;b)_\mathcal{A}$ denote the intervals $\{x\,\colon a\leq_{\mathcal{A}} x \leq_{\mathcal{A}} b \}$ and $\{x\,\colon a\leq_{\mathcal{A}} x <_{\mathcal{A}} b \}$, respectively. If the order $\mathcal{A}$ is clear from the context, then we omit the subscript $\mathcal{A}$. $Succ$ is the successor function on $(\omega, <)$. $\langle  \cdot, \cdot \rangle$ is the pairing function.
Computability-related notation is standard and follows \cite{soare_recursively_1987}. For example, $\leq_T$ denotes the Turing reduction.

If $X \subseteq \omega$ is a $\Delta_2$ set, then one can choose its \emph{computable approximation} $\xi(k,s)$, i.e. a $\{0,1\}$-valued computable function such that $\lim_s \xi(k,s) = X(k)$, for all $k$. We often use notation $X_s(k)$ for $\xi(k,s)$.
%}

% Without loss of generality, one may assume that $X_1(k) \neq X_0(k)$, for any $k$.

\section{Results}\label{sec:results}
The following two statements will be useful.
\begin{restatable}{proposition}{thmTrivial}\label{theorem:trivial} Let $f$ be a unary total computable function. Then $f$ is intrinsically computable if and only if either $f$ is almost constant, or $f$ is almost identity.
\end{restatable}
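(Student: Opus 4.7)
The forward (``$\Leftarrow$'') direction is a direct calculation. Suppose first that $f$ is almost constant, with $f(n)=c$ for all $n \notin F$ and $F$ finite. Let $\mathcal{A}=(\omega,\prec)$ be any computable copy of $(\omega,<)$ with underlying isomorphism $\varphi$. The defining equality $f_{\mathcal{A}}(\varphi(n)) = \varphi(f(n))$ yields $f_{\mathcal{A}}(y) = \varphi(c)$ for every $y \notin \varphi(F)$, so $f_{\mathcal{A}}$ is a finite modification of a constant function and is therefore computable. The almost-identity case is analogous, using $\varphi(f(n)) = \varphi(n)$ whenever $f(n) = n$.

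For the converse (``$\Rightarrow$'') one may appeal to the syntactic characterization of intrinsically computable relations on $(\omega,<)$ due to Moses, already cited in the introduction. A self-contained alternative runs as follows. Assume $f$ is neither almost constant nor almost identity; the plan is to build a computable copy $\mathcal{A}=(\omega,\prec)$ in which $f_{\mathcal{A}}$ Turing-computes a fixed noncomputable c.e.\ set $X$. The strategy is to extract from the hypotheses an effective infinite sequence of pairwise disjoint finite blocks $I_0 < I_1 < \cdots$ of $(\omega,<)$ together with, for each $k$, two local rearrangements $\sigma_k^0,\sigma_k^1$ of $I_k$ whose induced $f_{\mathcal{A}}$-values differ at a distinguished position inside $I_k$. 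Placing $\prec$ on $I_k$ according to $\sigma_k^{X(k)}$, with ``delay slots'' so that a c.e.\ approximation of $X$ gives a genuinely computable $\prec$ of order type $\omega$, one then recovers $X(k)$ by inspecting $f_{\mathcal{A}}$ on the $k$-th $\prec$-block.

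The main obstacle is the combinatorial extraction of those blocks. The natural split is into the subcases where the range of $f$ is finite and where it is infinite. In the finite-range subcase, failure of almost constancy supplies infinitely many $n$ with $f(n)\neq f(n+1)$; choosing each $I_k$ above any bound on the range makes $f(I_k)$ disjoint from $\bigcup_j I_j$, after which a single transposition inside $I_k$ distinguishes the two rearrangements. In the infinite-range subcase, failure of almost identity provides infinitely many $n$ with $f(n)\neq n$, but outputs of $f$ on $I_k$ may themselves land in other blocks; one either enlarges each $I_k$ to be $f$-closed or locates pairs inside $I_k$ whose $f$-images sit in an explicitly designated identity region of $\prec$, so that the rearrangement still produces a detectable change in $f_{\mathcal{A}}$. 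Once this combinatorial lemma is settled, the remaining stage-wise bookkeeping that keeps $\prec$ computable and of type $\omega$ is routine.
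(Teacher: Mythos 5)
Your easy direction is fine: in any computable copy $\mathcal{A}$, an almost constant (resp.\ almost identity) $f$ has $f_{\mathcal{A}}$ equal to a constant (resp.\ the identity) off a finite set, hence computable; this is even slightly more self-contained than the paper, which routes this direction through Moses's criterion as well.

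The converse is where there is a genuine gap. Appealing to Moses is indeed the paper's route, but Moses's theorem only says that intrinsic computability is equivalent to definability of the graph by a quantifier-free formula $\theta(x,y,\bar a)$ with finitely many parameters; one must still argue that this forces $f$ to be almost constant or almost identity (take $x_0$ above all parameters with $f(x_0)\neq x_0$; if $f(x_0)>x_0$ then every $y>x_0$ satisfies $\theta(x_0,y,\bar a)$, contradicting functionality, and if $f(x_0)<x_0$ then every $x\geq x_0$ satisfies $\theta(x,f(x_0),\bar a)$, so $f$ is almost constant). You omit this step, and your ``self-contained alternative'' does not close it. Its central combinatorial lemma---disjoint blocks admitting two placements whose induced $f_{\mathcal{A}}$-values differ at a distinguished element---is argued only when $\mathrm{range}(f)$ is finite. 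In the infinite-range case the hypotheses give you only that $f(n)\neq n$ infinitely often, and neither of your proposed remedies covers it: enlarging $I_k$ to an $f$-closed finite interval is impossible for non-block functions (already for the successor function no finite interval is closed under $f$), and the ``designated identity region'' device fails whenever $f$-images land inside or between the coding blocks themselves, which is the typical situation (successor again, or Euler's totient). Moreover the dynamics you call routine bookkeeping are not: since $X$ is a noncomputable c.e.\ set, the $k$-th configuration must be switchable at an arbitrarily late stage, and the only mechanism compatible with a computable $\prec$ of type $\omega$ is insertion of fresh elements (pushing to the right); making that succeed requires the needed pattern to recur arbitrarily far out in $(\omega,<,f)$ and the codings of all other blocks to survive the push---exactly the recurrence-plus-preservation analysis carried out by the paper's PtR-module arguments, and in the generality you need it amounts to Wright's Theorem \ref{thm:wright} rather than an afterthought. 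As written, the right-to-left implication is unproved; the quickest repair is to complete the short derivation from Moses's characterization as above.
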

\begin{proof}
    The result of Moses (Theorem~2 in~\cite{moses_relations_1986}) implies that a function $f$ is intrinsically computable if and only if there is a finite tuple $\bar a$ from $\omega$ and a quantifier-free formula $\theta(x,y,\bar{a})$ such that 
    \begin{equation}\label{equ:Moses}
        f(x) = y \ \Leftrightarrow\ (\omega,<) \models \theta(x,y,\bar{a}).
    \end{equation}
    If $f$ is almost constant or almost identity, then one can easily find $\bar{a}$ and $\theta(x,y,\bar{a})$ satisfying equation (\ref{equ:Moses}).
    
    Now suppose that $f$ is intrinsically computable, and $f$ is not equal to almost identity. Choose a tuple $\bar a = a_0 < a_1 < a_2 < \ldots < a_n$ and a formula $\theta$ satisfying (\ref{equ:Moses}). Choose a number $x_0 > a_n$ such that $f(x_0) \neq x_0$.
    
    We argue that $f(x_0) < x_0$. Indeed, assume that $f(x_0) > x_0$. Then, since $\theta$ is quan\-ti\-fi\-er-free and $x_0 > a_n$, it is clear that
    \[
        (\omega,<) \models \forall y[ y> x_0 \rightarrow \theta(x_0,y,\bar{a})].
    \]
    This contradicts the fact that $f$ is a function.   Now we have that $f(x_0) < x_0$ and $\theta(x_0,f(x_0),\bar{a})$ is satisfied. We deduce that
    \[
        (\omega,<) \models \forall x[ x\geq x_0 \rightarrow \theta(x,f(x_0),\bar{a})].
    \]
    Thus, for almost all $x$, the value $f(x)$ equals $f(x_0)$, i.e., $f$ is almost constant.
\end{proof}
%Another useful result is the following:

\begin{restatable}[see, e.g., Example 1.3 in \cite{chubb_degree_2009}]{proposition}{thmDgSpSucc}\label{theorem:DgSp_Succ}
The spectrum of successor is equal to the c.e. degrees.
\end{restatable}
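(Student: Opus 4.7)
The plan is to prove the two inclusions separately. For $DgSp(Succ) \subseteq$ the c.e. degrees: let $(\omega, \prec)$ be any computable copy of $(\omega,<)$. The complement of (the graph of) $Succ_\prec$ is c.e., since $(n,m) \notin Succ_\prec$ iff either $n \not\prec m$ (computable, as $\prec$ is) or $\exists k\,(n \prec k \prec m)$ (a $\Sigma_1$ condition). Hence $Succ_\prec$ is co-c.e., and its Turing degree is therefore c.e., since a set and its complement have the same Turing degree.

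For the reverse inclusion, fix a c.e. set $C$ with a computable approximation $(C_s)_{s\in\omega}$ and build a computable copy $(\omega,\prec)$ with $Succ_\prec \equiv_T C$. The design is to arrange $\omega$ in $\prec$-order as consecutive blocks $B_0 \prec B_1 \prec B_2 \prec \cdots$ with $|B_n|=2$ if $n \notin C$ and $|B_n|=3$ if $n \in C$, and to make the even natural $2n$ the $\prec$-least element of $B_n$ so that block indices are identifiable. Odd naturals serve as fillers, assigned stage by stage. At stage $s$: if $s=2n$, it opens $B_n$, placed after all previously positioned elements; if $s$ is odd, we first check whether some $m$ with $m \in C_s$ still has $|B_m|=2$, and if so insert $s$ as the third element of the smallest such $B_m$ (which is consistent because every $\prec$-relation involving $s$ is declared afresh at stage $s$), and otherwise place $s$ as the primary filler of the rightmost open block. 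A simple priority rule — e.g., alternating odd stages between pending extras (queued FIFO) and primaries — guarantees both kinds of placements eventually make progress.

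Decoding $C$ from $Succ_\prec$ is immediate: $n \in C$ iff $Succ_\prec(Succ_\prec(2n)) \neq 2(n+1)$, i.e., three $Succ_\prec$-steps are required from $2n$ to reach $2(n+1)$ instead of two. Conversely, $Succ_\prec \leq_T C$ because with oracle $C$ one can simulate the construction and read off the $\prec$-neighbours of any element. The main obstacle is the priority management: one must verify that no block remains without its primary filler, and that every $n \in C$ is eventually served with its third element; both are secured by the alternating scheduling, which reserves infinitely many odd stages for each task and ensures that at every stage $\prec$ is extended consistently to the new element.
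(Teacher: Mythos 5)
The paper does not actually prove this proposition -- it is quoted from the literature (Example 1.3 in the cited paper of Chubb et al.), so there is no in-paper argument to compare against; your proof is the standard one and is essentially correct. The upper bound is fine: in any computable copy the graph of the successor is co-c.e. (its complement is ``$x\not\prec y$ or some $z$ lies strictly between''), hence of c.e.\ degree. The coding direction is also the usual one (code $C$ into the sizes of consecutive blocks marked by the even numbers), and your reductions $C\le_T Succ_\prec$ and $Succ_\prec\le_T C$ are sound given the intended limit configuration. One bookkeeping detail needs repair, though: as literally written, at an odd stage with no eligible extra you place the filler into ``the rightmost open block''. Since blocks are opened every two stages but primaries are placed only at (roughly) every fourth stage under your alternation, the rightmost-block rule permanently starves infinitely many blocks of their primary filler; a starved block $B_n$ with $n\notin C$ then has size $1$ in the limit, and your decoder $n\in C\iff Succ_\prec(Succ_\prec(2n))\neq 2(n+1)$ gives the wrong answer (it also blocks the extra for $n\in C$, since your insertion rule requires $|B_n|=2$ first). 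The fix is exactly what your closing sentence implicitly assumes: serve primaries FIFO, i.e.\ always give the primary filler to the least-indexed block still lacking one; with that amendment every block stabilizes at size $2$ or $3$ according to $C$, and the rest of your verification (computability of $\prec$, order type $\omega$, and both Turing reductions, using $C$ to detect when a block has reached its final size) goes through.
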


\begin{theorem}\label{theo:finite-range}
Let $f$ be a unary computable function with finite range. If $f$ is not intrinsically computable then its spectrum is equal to the $\Delta_2$ degrees.
%functions with finite range -> $\Delta_2$
\end{theorem}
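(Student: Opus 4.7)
The $\Delta_2$ upper bound is routine: for any computable copy $\mathcal A=(\omega,\prec)$, iteratively extracting the $\prec$-least element not yet enumerated yields an $\emptyset'$-computable isomorphism $\pi:(\omega,<)\to\mathcal A$, so $f_\mathcal A=\pi\circ f\circ\pi^{-1}\le_T\emptyset'$ and $DgSp(f)\subseteq\Delta_2$.

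For the lower bound, Proposition~\ref{theorem:trivial} together with the finite-range hypothesis (which rules out almost identity) forces $f$ not to be almost constant, so at least two values $v_0,v_1\in\mathrm{range}(f)$ are each attained infinitely often; set $A=f^{-1}(v_0)$ and $B=f^{-1}(v_1)$, both infinite and computable. Given a $\Delta_2$ set $X$ with computable approximation $\chi_s(n)\to X(n)$, I construct a computable copy $\mathcal A$ with $f_\mathcal A\equiv_T X$ by targeting the isomorphism $\pi$ so that $\pi(v_0)=0$, $\pi(v_1)=1$, and for every $n\ge 0$ the $\prec$-position $p_n:=\pi^{-1}(h(n))$ of the $n$-th coding natural $h(n)$ lies in $A$ if $X(n)=0$ and in $B$ if $X(n)=1$. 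Here $h$ is a fixed computable enumeration of a suitable infinite subset of $\omega\setminus\{0,1\}$, chosen sparsely enough (when $C:=\omega\setminus(A\cup B)\ne\emptyset$) to leave room for ``filler'' naturals at the $C$-positions. Under this target,
\[
f_\mathcal A(h(n)) \;=\; \pi(f(p_n)) \;=\; \pi(v_{X(n)}) \;=\; X(n),
\]
giving $X\le_T f_\mathcal A$ via the computable decoder $n\mapsto h(n)$. Conversely $f_\mathcal A\le_T X$, since for $m=h(n)$ the equation $f_\mathcal A(m)=X(n)$ is an $X$-computable readout, while for the reserved and filler values of $m$ the construction fixes $f_\mathcal A(m)$ explicitly.

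The construction builds $\prec$ incrementally: at stage $s$ the fresh natural $s$ is inserted into the current finite order at an index chosen using $\chi_s$. Each coding natural $h(n)$ is initially placed at a $\prec$-position whose type matches $\chi_s(n)$; when $\chi_s(n)$ subsequently flips for an already-placed $h(n)$, additional naturals are inserted at low $\prec$-indices to shift $h(n)$'s current position upward to the next available slot of the now-correct type (such a slot exists because both $A$ and $B$ are infinite). A priority scheme giving small-$n$ corrections precedence guarantees that every natural has only finitely many $\prec$-predecessors in the limit—so the order type is $\omega$ and $\prec$ is computable—while every coding natural stabilizes at a correct-type position in finitely many steps. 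The main technical obstacle is exactly this coordination: the priority scheme must accommodate all finitely many $\chi_s(n)$-changes for each $n$ without unbounded disturbance to earlier stabilized positions. Since each $\chi_s(n)$ changes only finitely often, the total repair work at any fixed index is finite, and a standard priority argument suffices.
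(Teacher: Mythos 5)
Your setup is sound (the $\Delta_2$ upper bound, using Proposition~\ref{theorem:trivial} plus finite range to get two values $v_0,v_1$ with infinite preimage, and coding $X(n)$ into whether the position of the $n$-th coding natural lies in $f^{-1}(v_0)$ or $f^{-1}(v_1)$, with corrections made by inserting fresh naturals below a coding element to push it rightward); this is the same skeleton as the paper's proof. The gap is in the correction step and in the direction $f_\mathcal{A}\le_T X$. Inserting naturals at low $\prec$-indices displaces \emph{every} element $\prec$-above the insertion point, and hence changes $f_\mathcal{A}$ on all fillers and on all other coding naturals above $h(n)$. You assert that ``for the reserved and filler values of $m$ the construction fixes $f_\mathcal{A}(m)$ explicitly,'' but nothing in your construction arranges this: after a repair, a displaced filler sits at a new position that may carry a different $f$-value, and its final value then depends on how many repairs ever occur below it, i.e.\ on the number of times the approximation $\chi_s$ changes --- a finite quantity that is not computable from the limit $X$. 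Knowing $X$ does not yield a stage after which no further insertions happen below a given filler, since $\chi_s$ can flip away from and back to its limit arbitrarily late, triggering a repair each time. So the claimed $X$-computable readout of $f_\mathcal{A}$ on non-coding elements is unsupported, and the same unrestrained displacement also perturbs other coding elements far more than your one-line priority remark acknowledges.

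What is missing is exactly the value-preservation mechanism that the paper builds into its pushing-to-the-right module: when you push, you must simultaneously interleave fresh fillers among the \emph{displaced} elements so that each of them lands on a position carrying the same $f$-value as before (the target condition, made possible by Remark~\ref{rem:inf-string-01}), and for this to be possible one must first fix a threshold $M$ such that every value attained by $f$ beyond $M$ has infinite preimage and freeze a copy of $[0;M]$ at the bottom --- otherwise a displaced element sitting on a position whose $f$-value occurs only finitely often cannot be value-preserved at all (your sparseness condition on $h$ does not address this). With preservation, the $f_\mathcal{A}$-value of every filler never changes after it is first defined (so that part of $f_\mathcal{A}$ is outright computable), strategies do not injure one another, and both $f_\mathcal{A}\equiv_T X$ and the $\omega$-order-type verification go through. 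Without it, your argument establishes at best $X\le_T f_\mathcal{A}$ and the order type, but not the converse reduction, so the proof as written is incomplete.
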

\begin{proof}
The proof is based on the ideas from Theorem~1.2 of~\cite{wright_degrees_2018}. We provide a detailed exposition, so that a reader could familiarize with the proof techniques.

We fix $c_0\neq c_1$ such that $f^{-1}(c_i)$ is infinite. Without loss of generality, one may assume that $c_0 = 0$ and $c_1 = 1$.

Let $X\subseteq \omega$ be an arbitrary $\Delta_2$ set. We build a computable isomorphic copy $\mathcal{A} = (\omega, <_{\mathcal{A}})$ of the order $(\omega,<)$ such that $f_{\mathcal{A}}$ is Turing equivalent to the set $X$.
Our construction will ensure that the following two conditions hold: 
\begin{enumerate}
    \item[(i)] $k\in X$ if and only if $f_{\mathcal{A}}(2k) = 1$, for all $k$;
    
    \item[(ii)] the restriction of $f_{\mathcal{A}}$ to the set of odd numbers (i.e., $f_{\mathcal{A}}\upharpoonright \{2k+1\,\colon k\in\omega \}$) is computable.
\end{enumerate}
It is clear that these conditions imply $f_{\mathcal{A}}\equiv_T X$.

Let $M$ be a large enough natural number such that
\[
    (\forall x > M) [\text{the $f$-preimage of } f(x) \text{ is infinite, and } x\not\in \mathrm{range}(f)].
\]
Beforehand, we use odd numbers to copy the initial segment $[0;M]$ of $(\omega, <)$. More formally, we put $2k+1 <_{\mathcal{A}} 2l+1$ for all $k < l \leq M$. In addition, any newly added (to the copy $\mathcal{A}$) number will be strictly $\mathcal{A}$-greater than $2M+1$.

%We fix a computable $\{0,1\}$-valued function $\xi(k,s)$ such that $\lim_s \xi(k,s) = X(k)$, for all $k$. 
Our construction satisfies the following requirements:
\[
   \begin{array}{l} 
        e\in X \ \Leftrightarrow\ f_{\mathcal{A}}(2e) = 1,\\
        e\not\in X \ \Leftrightarrow\ f_{\mathcal{A}}(2e) = 0.
   \end{array}
   \tag{$\mathcal{R}_e$}
\]
As usual, this will be achieved by working with a computable  approximation $X_s(e)$.

By $\mathcal{A}_s$ we denote the finite structure built at a stage $s$. At each stage $s$, there is a natural isomorphic embedding $h_s$ from $\mathcal{A}_s$ into $(\omega,<)$. If $\mathcal{A}_s$ consists of $a_0 <_{\mathcal{A}} a_1 <_{\mathcal{A}} a_2 <_{\mathcal{A}} \ldots <_{\mathcal{A}} a_n$, then we assume that $h_s(a_i) = i$, for all $i\leq n$.

This convention allows one to talk about values $f_{\mathcal{A}_s}(x)$ for elements $x\in \mathcal{A}_s$. We simply assume that
\[
    f_{\mathcal{A}_s}(a_i) = h_s^{-1} \circ f \circ h_s(a_i).
\]
Our construction will ensure that $f_{\mathcal{A}}(x) = \lim_s f_{\mathcal{A}_s}(x)$, for all $x$. Sometimes (when the usage context is unambiguous), we write just $f_{\mathcal{A}}(x)$ in place of $f_{\mathcal{A}_s}(x)$.

%\medskip

\emph{Strategy $\mathcal{R}_e$ in isolation.} Suppose that $(s_0+1)$ is the first stage of work for this strategy. Then we add $2e$ to the right end of $\mathcal{A}$. Since we want to ensure that $f_{\mathcal{A}_{s_0+1}}(2e) = X_{s_0+1}(e)$, we also add (if needed) finitely many fresh odd numbers in-between $\mathcal{A}_s$ and $2e$, i.e., we set
\[
    a <_{\mathcal{A}} 2k+1 <_{\mathcal{A}} 2e, 
\]
for $a\in \mathcal{A}_{s_0}$ and newly added numbers $2k+1$.

We say that $\mathcal{R}_e$ \emph{requires attention} at a stage $s$ if the current value $f_{\mathcal{A}_s}(2e)$ is not equal to $X_s(e)$. In order to deal with $\mathcal{R}_e$, we introduce the following important ingredient of our proof techniques. For the sake of future convenience, we give a \emph{general} description of the module.

% @Dariusz: I have inserted description here 
\begin{tcolorbox}[breakable,title={\textbf{Pushing-to-the-right module (PtR-module).}},boxrule=0pt,colback=gray!10,colframe=gray!10,coltitle=black]
We split the (current finite) structure $\mathcal{A}_s$ into three intervals:
$B <_{\mathcal{A}} C <_{\mathcal{A}} D$, where, say, we have $B = [a;b]_{\mathcal{A}}$, $C = \{ c^0 <_{\mathcal{A}} c^1 <_{\mathcal{A}} \ldots <_{\mathcal{A}} c^m\}$, and $D = \{d^0 <_{\mathcal{A}} d^1 <_{\mathcal{A}} \ldots <_{\mathcal{A}} d^n\}$. Informally speaking, the module aims to achieve the following goal: while preserving all values $f_{\mathcal{A}}(x)$ for $x\in B \cup D$, we want to change the function $f_{\mathcal{A}} \upharpoonright C$ in such a way that $f_{\mathcal{A}}$ satisfies a particular requirement. In addition, we require that $C$ remains an interval inside $\mathcal{F}$.

More formally, we extend the structure $\mathcal{A}_s$ to a finite structure $\mathcal{F}$ (which is intended to be an initial segment of $\mathcal{A}_{s+1}$) with the following properties:
\begin{itemize}
    \item every element $x\in \mathcal{F} \setminus \mathcal{A}_s$ is a fresh odd number, and each such $x$ satisfies either $B <_{\mathcal{A}} x <_{\mathcal{A}} C$ or $x >_{\mathcal{A}} C$;

    \item $f_{\mathcal{F}}(d^i) =  f_{\mathcal{A}_s}(d^i)$ for all $i\leq n$; 
    
    \item the new values $f_{\mathcal{F}}(c^j)$ satisfy some \emph{target condition}.
\end{itemize}
In the future, when we talk about a particular instance of the module, we will always explicitly specify the desired target condition.

Roughly speaking, our module keeps the interval $B$ fixed, while all elements from $C \cup D$ are pushed to the right (with the help of newly added odd numbers). In addition, the elements of $C$ stick together.
\end{tcolorbox}
%\smallskip

Going back to $\mathcal{R}_e$: if $\mathcal{R}_e$ requires attention at a stage $s$, then we implement the following actions.

\emph{The PtR-module for the strategy $\mathcal{R}_e$.} In our $\mathcal{R}_e$-setting, we choose the middle interval $C$ as the singleton $\{ 2e\}$. The desired target condition is a natural one: we aim to satisfy $f_{\mathcal{A}}(2e) = X_s(e)$.

We build a finite structure $\mathcal{F}$ extending $\mathcal{A}_s$ as dictated by the PtR-module. Then we declare that $\mathcal{F}$ is the output of our module, and proceed further. This concludes the description of the $\mathcal{R}_e$-strategy.

\smallskip

\emph{Construction.} At a stage $s+1$, we work with strategies $\mathcal{R}_e$, for $e\leq s$. So, a strategy $\mathcal{R}_e$ starts working at the stage $e+1$. For each $\mathcal{R}_e$ (in turn), our actions follow the description given above. After $\mathcal{R}_i$ finished its work, the PtR-module of the next strategy $\mathcal{R}_{i+1}$ works with the finite structure produced by $\mathcal{R}_i$. 
Since the described PtR-module preserves $f_{\mathcal{A}} \upharpoonright (B \cup D)$, our strategies do not injure each other. We define $\mathcal{A} = \bigcup_{s\in\omega} \mathcal{A}_s$, where $\mathcal{A}_{s+1}$ is the final content of our structure produced by the PtR-module of $\mathcal{R}_s$ at the end of stage $s+1$.

\smallskip

\emph{Verification.} First, we show that in the construction, every application of a PtR-module is successful (i.e., one can always build a desired structure $\mathcal{F}$).

In order to prove this, we consider our structures from a different angle: The structure $(\omega,<,f)$ can be treated as an infinite string $\beta$ over a finite alphabet $\Sigma = \mathrm{range}(f)$, where the $i$-th symbol $\beta(i)$ of the string is equal to $f(i)$, $i\in \omega$. 
%We note that this kind of perspective will be also useful in further results.

Then the construction of $\mathcal{F}$ in the PtR-module can be re-interpreted as follows. We are given three finite strings: $\sigma$, $\tau$ (of length one), and $\rho$ (of length $n+1$)~--- for the intervals $B$, $C = \{ 2e\}$, and $D$ correspondingly. Our task is to find finite strings $\tau',\rho'_0,\rho'_1,\ldots,\rho'_n$ with the following property:
\[
    \sigma \, \tau' \, a \, \rho'_0 \, \rho(0) \, \rho'_1 \, \rho(1) \, \ldots \, \rho'_n \, \rho(n),
\]
where $a = X_s(e)$, is an initial segment of $\beta$.

This task can be always implemented successfully~--- this is a consequence of the following simple combinatorial fact. 

\begin{remark}\label{rem:inf-string-01}
Let $\Sigma$ be a finite alphabet, and let $\alpha \in \Sigma^{\omega}$ be an infinite string over $\Sigma$. Suppose that every symbol from $\Sigma$ occurs infinitely often in $\alpha$. Then for every finite string $\sigma \in \Sigma^{<\omega}$ of length $m>0$, one can find finite strings $\tau_0,\tau_1,\ldots,\tau_{m-1}$ such that
\[
    \tau_0 \, \sigma(0) \, \tau_1 \, \sigma(1) \, \ldots \, \tau_{m-1} \, \sigma(m-1) \text{ is an initial segment of } \alpha.
\]
\end{remark}

So, we deduce that all applications of a PtR-module are successful. Hence, if $e\leq s$, then by the end of the stage $s+1$ we have $f_{\mathcal{A}_{s+1}}(2e) = X_s(e)$. This implies that every requirement $\mathcal{R}_e$ is satisfied. 

Each element $a\in\mathcal{A}$ moves (to the right) only finitely often. Indeed, there are only finitely many even numbers $2e$ such that $2e \leq_{\mathcal{A}} a$. Consider a stage $s^{\ast}$ such that the values $X_s(e)$ (for these $2e$) never change after $s^{\ast}$. Clearly, the element $a$ never moves after the stage $s^{\ast}$. 

We deduce that the structure $\mathcal{A}$ is a computable copy of $(\omega, <)$. For every $k$, after the value $f_{\mathcal{A}_s}(2k+1)$ is defined for the first time, this value never changes (since the PtR-mo\-dule always preserves the restriction $f_{\mathcal{A}} \upharpoonright (B \cup D)$). Therefore, our structure $\mathcal{A}$ satisfies Conditions~(i) and~(ii) defined above. Theorem~\ref{theo:finite-range} is proved.
\end{proof}
%adaptation of Wright's proof

\subsection{Block functions}
From now on, we study some natural subclasses of unary total recursive functions with infinite range.
\begin{definition}\label{def:block_function}
    We say that a total function $f\colon \omega \to \omega$ is a \emph{block function} if for every $a \in \omega$, there is a finite interval $I$ of $(\omega,<)$ with the following properties:
    \begin{itemize}
        \item $a \in I$;
        
        \item $I$ is closed under $f$ (i.e, for all $x\in I$, $f(x) \in I$);
        
        \item $I$ is closed under $f^{-1}$ (i.e., for all $x\in I$, $f^{-1}(x) \subseteq I$).
    \end{itemize}
    Suppose that $I$ is the least such interval (with respect to set-theoretic inclusion). Then we say that the structure $(I, <, f\upharpoonright I)$ is an \emph{$f$-block} (of the element $a$). %\textcolor{blue}{
    
    If $(I, <, f\upharpoonright I)$ is an $f$-block, we refer to its isomorphism type as an $f$-type (or a type).
    %}
\end{definition}
\begin{remark}\label{representation}
For any computable block function $f$ there is a 1-1 computable enumeration of its types. $f$ can be represented by the infinite string $\alpha_f \in [0;N)^\omega$, where $[0,N)$ is the domain of the enumeration, for some $N \in \omega \cup \{+\infty\}$. For example, if $I_0,I_1,\ldots,I_N$ are all (isomorphism types of) $f$-blocks, then $(\omega,<,f)$ can be treated as an infinite string $\alpha_f \in \{n: 0 \leq n \leq N\}^{\omega}$, e.g. a string $012012012 \ldots$ corresponds to a disjoint sum of the following form:
$I_0 + I_1 + I_2 + I_0 + I_1 + I_2 + I_0 + I_1 + I_2 + \ldots $
\end{remark}
\begin{example} $f(n) = 2 \cdot \lfloor \frac{n}{2} \rfloor$ is a block function. Its spectrum consists of all $\Delta_2$ degrees by Theorem \ref{theorem:block-functions} below.
\end{example}

\begin{example}\label{ex:involution}
Consider finite structures $\mathcal J_n$ from Figure \ref{fig:involution}. Let $g$ be the involution such that $(\omega, < ,g) \cong \mathcal J_0 + \mathcal J_1 + \mathcal J_2 + \ldots$ Clearly, $g$ is a block function. Proposition \ref{involution_proof} shows its degree spectrum is all of the c.e. degrees.
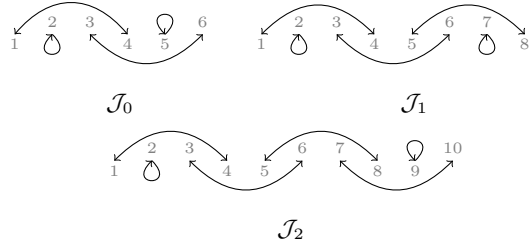
\begin{figure}
\captionsetup[subfigure]{labelformat=empty}
\begin{subfigure}[b]{0.25\textwidth}
\begin{tikzpicture}
		\node [label={[gray,yshift=-0.45cm]\tiny{$1$}}] (0) at (-6.5-10, 3.75) {};
		\node [label={[gray,yshift=-0.15cm]\tiny{$2$}}] (1) at (-6-10, 3.75) {};
		\node [label={[gray,yshift=-0.15cm]\tiny{$3$}}] (2) at (-5.5-10, 3.75) {};
		\node [label={[gray,yshift=-0.45cm]\tiny{$4$}}] (3) at (-5-10, 3.75) {};
		\node [label={[gray,yshift=-0.45cm]\tiny{$5$}}] (4) at (-4.5-10, 3.75) {};
		\node [label={[gray,yshift=-0.15cm]\tiny{$6$}}] (5) at (-4-10, 3.75) {};
		\draw [<->,bend left=50, looseness=1.25] (0.center) to (3.center);
		\draw [->,in=-135, out=-45, loop] (1.center) to ();
		\draw [<->,bend right=45, looseness=1.25] (2.center) to (5.center);
		\draw [->,in=135, out=45, loop] (4.center) to ();
\end{tikzpicture}
\caption{$\mathcal J_0$}
\end{subfigure}
\begin{subfigure}[b]{0.35\textwidth}
\begin{tikzpicture}
		\node [label={[gray,yshift=-0.45cm]\tiny{$1$}}] (0a) at (-11.5-1.5, 3.75) {};
		\node [label={[gray,yshift=-0.15cm]\tiny{$2$}}] (1a) at (-11-1.5, 3.75) {};
		\node [label={[gray,yshift=-0.15cm]\tiny{$3$}}] (2a) at (-10.5-1.5, 3.75) {};
		\node [label={[gray,yshift=-0.45cm]\tiny{$4$}}] (3a) at (-10-1.5, 3.75) {};
		\node [label={[gray,yshift=-0.45cm]\tiny{$5$}}] (4a) at (-9.5-1.5, 3.75) {};
		\node [label={[gray,yshift=-0.15cm]\tiny{$6$}}] (5a) at (-9-1.5, 3.75) {};
		\node [label={[gray,yshift=-0.15cm]\tiny{$7$}}] (6a) at (-8.5-1.5, 3.75) {};
		\node [label={[gray,yshift=-0.45cm]\tiny{$8$}}] (7a) at (-8-1.5, 3.75) {};
		\draw [<->,bend left=45, looseness=1.25] (0a.center) to (3a.center);
		\draw [->,in=-135, out=-45, loop] (1a.center) to ();
		\draw [<->,bend right=45, looseness=1.25] (2a.center) to (5a.center);
		\draw [->,in=-135, out=-45, loop] (6a.center) to ();
        \draw [<->,bend left=45, looseness=1.25] (4a.center) to (7a.center);
\end{tikzpicture}
\caption{$\mathcal J_1$}
\label{}
\end{subfigure}
\begin{subfigure}[b]{0.4\textwidth}
\begin{tikzpicture}
        \node [label={[gray,yshift=-0.45cm]\tiny{$1$}}] (0b) at (-11.5+3, 3.75) {};
		\node [label={[gray,yshift=-0.15cm]\tiny{$2$}}] (1b) at (-11+3, 3.75) {};
		\node [label={[gray,yshift=-0.15cm]\tiny{$3$}}] (2b) at (-10.5+3, 3.75) {};
		\node [label={[gray,yshift=-0.45cm]\tiny{$4$}}] (3b) at (-10+3, 3.75) {};
		\node [label={[gray,yshift=-0.45cm]\tiny{$5$}}] (4b) at (-9.5+3, 3.75) {};
		\node [label={[gray,yshift=-0.15cm]\tiny{$6$}}] (5b) at (-9+3, 3.75) {};
		\node [label={[gray,yshift=-0.15cm]\tiny{$7$}}] (6b) at (-8.5+3, 3.75) {};
		\node [label={[gray,yshift=-0.45cm]\tiny{$8$}}] (7b) at (-8+3, 3.75) {};
		\node [label={[gray,yshift=-0.45cm]\tiny{$9$}}] (8b) at (-7.5+3, 3.75) {};
		\node [label={[gray,yshift=-0.15cm]\tiny{$10$}}] (9b) at (-7+3, 3.75) {};
		\draw [<->,bend left=45, looseness=1.25] (0b.center) to (3b.center);
		\draw [->,in=-135, out=-45, loop] (1b.center) to ();
		\draw [<->,bend right=45, looseness=1.25] (2b.center) to (5b.center);
		\draw [->,in=135, out=45, loop] (8b.center) to ();
        \draw [<->,bend left=45, looseness=1.25] (4b.center) to (7b.center);
        \draw [<->,bend right=45, looseness=1.25] (6b.center) to (9b.center);
\end{tikzpicture}
\caption{$\mathcal J_2$}
\end{subfigure}
\caption{Structures $\mathcal J_n = ([1;6+2n],<,f)$, for $n=0,1,2$, where $f$ is the involution such that $f(k) = k$ iff $k=2$ or $k = 6+2n-1$, and $f(k) = k+3$ for odd numbers $\leq 6 + 2n -3$.}
\label{fig:involution}
\end{figure}
\end{example}

\begin{proposition}\label{involution_proof}
The degree spectrum of $g$, as defined in Example \ref{ex:involution}, is all of the c.e. degrees. 
\end{proposition}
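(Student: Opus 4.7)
The plan is to prove the two containments separately. For the lower bound I would observe that the involution $g$ has infinitely many fixed and infinitely many non-fixed points, so it is neither almost constant nor almost identity; by Proposition~\ref{theorem:trivial} it is not intrinsically computable, and Theorem~\ref{thm:wright} then places every c.e.\ degree in $DgSp(g)$.

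For the upper bound I would fix an arbitrary computable copy $\mathcal{A}$ and argue that $g_{\mathcal{A}} \equiv_T \mathrm{Succ}_{\mathcal{A}}$; combined with Proposition~\ref{theorem:DgSp_Succ}, which forces $\mathrm{Succ}_{\mathcal{A}}$ to have c.e.\ degree, this yields the desired upper bound. For $\mathrm{Succ}_{\mathcal{A}} \leq_T g_{\mathcal{A}}$ the key observation is that the $g_{\mathcal{A}}$-block of any element $a$ is uniformly computable from $g_{\mathcal{A}}$ together with the computable order $<_{\mathcal{A}}$: starting with $I_0 = \{a\}$, iteratively set $I_{k+1}$ to be the smallest $<_{\mathcal{A}}$-interval containing $I_k \cup g_{\mathcal{A}}(I_k)$; since the block of $a$ is finite, this stabilizes at it. The pairwise distinct block sizes $6,8,10,\ldots$ let the cardinality of the computed block determine its index $n$. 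If $a$ is not the $<_{\mathcal{A}}$-right endpoint of its block, $\mathrm{Succ}_{\mathcal{A}}(a)$ is just the $<_{\mathcal{A}}$-next element inside the block; otherwise I locate the next block $I_{n+1}$ as the $g_{\mathcal{A}}$-block of any $b >_{\mathcal{A}} \max I_n$ whose own block has size $6+2(n+1)$ (this block must be $I_{n+1}$, since block sizes strictly increase), and output $\mathrm{Succ}_{\mathcal{A}}(a) = \min_{<_{\mathcal{A}}} I_{n+1}$.

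For $g_{\mathcal{A}} \leq_T \mathrm{Succ}_{\mathcal{A}}$ I would exploit that a Turing reduction between two specific sets may depend on them. Letting $\ell_{\mathcal{A}} \in \omega$ be the $<_{\mathcal{A}}$-least element, I would hardcode $\ell_{\mathcal{A}}$ into the functional. With $\mathrm{Succ}_{\mathcal{A}}$ as oracle, iterating $\mathrm{Succ}_{\mathcal{A}}^{k}(\ell_{\mathcal{A}})$ until the value equals $a$ yields the position $p$ of $a$ in $\mathcal{A}$; then $n$ is the unique integer with $n^2+5n \leq p < (n+1)^2 + 5(n+1)$, the within-block position is $q = p - n^2 - 5n$, and $g_{\mathcal{A}}(a)$ is read off from the explicit description of $g$ on $J_n$ (pair forward by three $<_{\mathcal{A}}$-steps if $q$ is even and distinct from $2n+4$, backward by three if odd and distinct from $1$, fixed otherwise).

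The main obstacle is this $\leq_T$ direction, because the position function is only $\Delta_2$ in $\mathrm{Succ}_{\mathcal{A}}$ when one insists on uniformity in $\mathcal{A}$; hardcoding $\ell_{\mathcal{A}}$ is the technical point that resolves this, as Turing reductions between two fixed sets need not be uniform in the underlying copy. Once $g_{\mathcal{A}} \equiv_T \mathrm{Succ}_{\mathcal{A}}$ is established, the two containments combine to give $DgSp(g)$ exactly the c.e.\ degrees.
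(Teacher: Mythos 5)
Your overall architecture is the same as the paper's: the lower bound via Proposition~\ref{theorem:trivial} and Theorem~\ref{thm:wright}, and the upper bound by arguing $g_{\mathcal{A}} \equiv_T Succ_{\mathcal{A}}$ in every computable copy and invoking Proposition~\ref{theorem:DgSp_Succ}. (The paper's official one-line proof instead goes through Proposition~\ref{thm:nikolay}, noting $\deg(c_g)=\mathbf{0}$; its illustrative second proof is exactly your block-recovery strategy, packaged as the RS-module.) Your lower bound and your direction $g_{\mathcal{A}} \leq_T Succ_{\mathcal{A}}$ (hard-coding the least element, reading off positions) are fine. The gap is in the crucial reduction $Succ_{\mathcal{A}} \leq_T g_{\mathcal{A}}$, namely in the claim that the $\mathcal{J}$-component of $a$ is obtained by iterating the interval hull of $I_k \cup g_{\mathcal{A}}(I_k)$.

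Two concrete problems. First, if $a$ is a fixed point of $g$ (each $\mathcal{J}_n$ contains two of them), then $g_{\mathcal{A}}(I_0)=\{a\}$ and your iteration stabilizes at the singleton $\{a\}$ at once: since $g$ is an involution, $\{a\}$ is already an interval closed under $g_{\mathcal{A}}$ and $g_{\mathcal{A}}^{-1}$, so the limit of your iteration is not the copy of $\mathcal{J}_n$ containing $a$. Consequently the step ``the cardinality of the computed block determines $n$'' and the ensuing successor computation fail on fixed points. Second, even for non-fixed $a$, the operation ``list the smallest $<_{\mathcal{A}}$-interval containing a given finite set'' is not effective in a computable copy: membership in $[x;y]_{\mathcal{A}}$ is decidable, but an unbounded search never certifies that all in-between elements have been found; if this hull operation were computable outright, $Succ_{\mathcal{A}}$ would be computable in every copy, contradicting Proposition~\ref{theorem:DgSp_Succ}. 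What makes the recovery work is the specific shape of $g$: a non-fixed $x$ has \emph{exactly two} elements strictly between $x$ and $g_{\mathcal{A}}(x)$, and a fixed point $x$ is bracketed by a (unique) pair $y, g_{\mathcal{A}}(y)$, which can be found by a search that is guaranteed to halt. This is precisely the paper's procedure $(\star)$; patching your block-recovery step along these lines (keeping your use of the pairwise distinct block sizes, which the paper also relies on via non-embeddability of the types) repairs the argument, and the rest of your proof then goes through.
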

\begin{proof}
Observe that $\deg(c_g) = \mathbf{0}$. By Proposition \ref{thm:nikolay}, Theorem \ref{thm:wright} and Proposition \ref{theorem:trivial}, the spectrum of $g$ is all of the c.e. degrees. For the sake of illustration, below we give a separate proof.

We show that the RS-module introduced in Section \ref{sec:qbf} can be successfully applied to $(\omega,<,g)$ which implies that the spectrum of $g$ is all of the c.e. degrees.
\par
Let $\mathcal I$ be an initial segment of a computable copy $\mathcal A$ of $(\omega,<)$. Let $\mathcal R$ be the condition saying that $\mathcal I \cong \mathcal J_0 + \mathcal J_1 + \ldots + \mathcal J_k$, for some $k \in \omega$. We show that $\mathcal R$ is decidable in $g_\mathcal{A}$. 
\par
Observe that given any $a \in \mathcal I$, one can computably in $g_\mathcal{A}$ recreate the $g$-block of $a$ in $\mathcal A$. The idea is that, initially, the recreated fragment $G:= \{a\}$. We use the following procedure of one argument $x$, starting from $x = a$. 
\par
$(\star)$ We compute $g_\mathcal{A}(x)$ and, if $g_\mathcal{A}(x) = x$, then we know there is precisely one number $y$ such that $x$ is between $y_1 := y$ and $y_2:= g_\mathcal{A}(y)$, whereas if $g_\mathcal{A}(x) \neq x$, we know that there are precisely two numbers $y_1, y_2$ between $x$ and $g_\mathcal{A}(x)$. In either case, we find $y_1,y_2$ accordingly, and for each $y \in \{y_1,y_2\}$, if $y$ is not yet in $G$, we add it to $G$ and we run $(\star)$ on input $y$.
\par
One can prove that the above algorithm, when run with $a$ on input, stops and after stopping, $G$ consists of precisely the elements of the $g$-block of $a$ in $\mathcal A$.
\par
The above fact allows us to decide whether, for each $a \in \mathcal I$, $\mathcal I$ contains the whole $g$-block of $a$. This is sufficient, since $\mathcal I$ is an initial segment and, therefore, if it consists of full blocks only, then it must have the desired form.
\par
It remains to show that given any initial $<_\mathcal{A}$-segment $\mathcal I_t \cong \mathcal J_0 + \mathcal J_1 + \ldots + \mathcal J_t$, we can computably in $g_\mathcal{A}$ extend it to the $<_\mathcal{A}$-segment $\mathcal I_{t+1}\cong \mathcal J_0 + \mathcal J_1 + \ldots + \mathcal J_{t+1}$. We can proceed in stages as follows. At stage $0$ we declare that all numbers outside $\mathcal I_t$ are unused. At any stage $s$, we take the least unused number and, computably in $g_\mathcal{A}$, we recreate its $g$-block in $\mathcal A$. All numbers that are in this block join used elements. If the block is isomorphic to $\mathcal J_{t+1}$ (which we check computably in $g_\mathcal{A}$) then we already have $\mathcal I_{t+1}$ which becomes $\mathcal I_t$ extended by the $g$-block of $a$. Otherwise, we go to the next stage.
\par
Notice that in the above argument, we have used the fact that $g$-types are pairwise non-embeddable. Otherwise, we would not be sure, after finding some block isomorphic to $\mathcal J_{t+1}$ whether we actually have a copy of $\mathcal J_{t+1}$ or maybe a copy of some other type in which $\mathcal J_{t+1}$ is embedded.
\end{proof}

\begin{restatable}{theorem}{theoremOnBlockFunctions}\label{theorem:block-functions}
Let $f$ be a computable block function such that it has only finitely many $f$-types and $f$ is not almost identity.
Then the spectrum $DgSp(f)$ consists of all $\Delta_2$ degrees.
%block functions composed of finitely many mutually nonembeddable types -> $\Delta_2$
\end{restatable}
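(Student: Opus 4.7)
The proof establishes both $DgSp(f) \subseteq \Delta_2$ and that every $\Delta_2$ degree lies in $DgSp(f)$.

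For the upper bound, in any computable copy $\mathcal{A}$ of $(\omega,<)$ the unique isomorphism $\varphi\colon (\omega,<) \to \mathcal A$ is $\Delta_2$: one computes $\varphi(n)$ from $\emptyset'$ by iteratively selecting the $<_\mathcal{A}$-least not-yet-output element. Hence $f_\mathcal{A} = \varphi \circ f \circ \varphi^{-1} \leq_T \emptyset'$.

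For the lower bound, fix a $\Delta_2$ set $X$ with computable approximation $X_s$. Following Remark~\ref{representation}, view $(\omega,<,f)$ as the infinite string $\alpha_f$ over an alphabet of block types $T_0,\ldots,T_{N-1}$, and let $\Sigma'$ be the set of types occurring infinitely often in $\alpha_f$. Since $f$ is not almost identity, either $|\Sigma'| \geq 2$, or $\Sigma' = \{T\}$ with $T$ non-singleton (otherwise almost all blocks would be singleton identity blocks). In either case I single out two \emph{coding patterns} $\pi_0, \pi_1$, each a short finite sequence of blocks of types from $\Sigma'$ together with a distinguished coding element $c$: when $|\Sigma'| \geq 2$, take $\pi_i$ to be a single block of type $T_i$ for two chosen distinct $T_0, T_1 \in \Sigma'$; when $\Sigma' = \{T\}$, take both $\pi_0, \pi_1$ to be blocks of type $T$ with $c$ placed at two different internal positions. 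The two patterns are distinguishable from $f_\mathcal{A}$ and the computable order $<_\mathcal{A}$ via the block-recreation procedure from the proof of Proposition~\ref{involution_proof}.

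Designate coding elements $c_0, c_1, \ldots \in \omega$ and adopt the requirements $\mathcal R_e$: in $\mathcal A$, the element $c_e$ sits inside a realization of $\pi_{X(e)}$. At stage $s+1$, if $X_s(e)$ disagrees with the currently encoded value, I apply a PtR-style module adapted to block functions: split $\mathcal{A}_s$ as $B <_\mathcal A C <_\mathcal A D$ with $C$ the blocks realizing the $e$-th coding pattern, and extend to $\mathcal{A}_{s+1}$ by inserting fresh elements grouped into new blocks so that (i) the block-type sequence of $\mathcal{A}_{s+1}$ remains an initial segment of $\alpha_f$, (ii) $C$ is replaced by a realization of $\pi_{X_s(e)}$, and (iii) every element of $B \cup D$ retains its block-membership and internal position, hence its $f_\mathcal{A}$-value. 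Finitely many initial blocks of $\alpha_f$ whose types lie outside $\Sigma'$ are copied into an immovable initial segment before the coding begins.

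The main obstacle is showing that such an extension always exists. This reduces to Remark~\ref{rem:inf-string-01}: given any target block-type sequence over $\Sigma'$ that we wish to insert (after $B$, to realize the new pattern and to align with the preserved types of $D$), one can fit it into $\alpha_f$ with suitable padding blocks of types from $\Sigma'$, because every symbol of $\Sigma'$ occurs infinitely often in the tail of $\alpha_f$. As in Theorem~\ref{theo:finite-range}, since PtR preserves $f_\mathcal{A}\upharpoonright (B \cup D)$, strategies do not injure each other, each element of $\omega$ stabilizes in position, and the final $\mathcal A = \bigcup_s \mathcal{A}_s$ is a computable copy of $(\omega,<)$. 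Reading $X(e)$ from the pattern around $c_e$ gives $X \leq_T f_\mathcal{A}$, while using the $\Delta_2$ oracle $X$ to detect stabilization of all relevant $X_s(e)$ gives $f_\mathcal{A} \leq_T X$.
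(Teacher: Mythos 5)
Your skeleton (code $X$ into the blocks around designated elements, repair with a pushing-to-the-right module, preserve $B\cup D$, appeal to Remark~\ref{rem:inf-string-01}) is the right one, but your case split hides the main combinatorial difficulty, and the branch ``$|\Sigma'|\ge 2$: flip the block of $c_e$ between two chosen types $T_0,T_1$'' does not work as stated. After each flip the finite structure must again be isomorphic to an initial segment of $(\omega,<,f)$, i.e.\ its block-type sequence must be an initial segment of $\alpha_f$, and the coding region consists of \emph{already existing} elements. If $|T_0|\neq|T_1|$ (which is the typical situation), the same set of old elements cannot be re-realized as a single block of the other type: you must either let old companions spill into neighbouring padding blocks or inject fresh elements inside the coding region. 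Then the content of the final block of $c_e$, and the final values of the displaced companions, depend on \emph{how many times} $X_s(e)$ changed, which is not computable from $X$, so the direction $f_{\mathcal A}\le_T X$ is no longer justified. Remark~\ref{rem:inf-string-01} does not repair this: it lets you re-place a prescribed \emph{sequence} of blocks with padding in between, but it neither lets a fixed contiguous set of old elements realize a block of a different size, nor guarantees that the two configurations you want to alternate between are interchangeable inside the tail of $\alpha_f$. This is exactly why the paper strengthens ``two types occur infinitely often'' to Case~(a): two \emph{distinct permuted strings} $\sigma,\tau$ occur infinitely often (same number of blocks and the same multiset of types, so the very same elements can realize either configuration, and the flip preserves everything outside the coding interval); and why, when no such permuted pair exists, it needs Lemma~\ref{lem:combin-analysis} (occurrences of $db^me$ for arbitrarily large $m$) and the Case~(c) construction with boundary/non-boundary companions, arranged so that the block of each coding element at any stage depends only on the \emph{current} value $X_s(e)$, never on the history.

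Two further points. Your single-type branch ($\Sigma'=\{T\}$, coding by the position of $c_e$ inside a $T$-block) is the paper's Case~(b), but the feature that makes $f_{\mathcal A}\le_T X$ go through there is the specific filler bookkeeping ($q$ fresh elements before the coding interval and one after, versus one before and $q$ after), which guarantees that whenever $X_{s_0}(e)=X_{s_1}(e)$ the block of a companion contains exactly the same elements at stages $s_0$ and $s_1$; you give no analogous invariant. Finally, ``using the oracle $X$ to detect stabilization of the $X_s(e)$'' is not a valid reduction: knowing $X(e)$ does not tell you a stage after which the approximation stops changing. The reduction $f_{\mathcal A}\le_T X$ must instead come from the construction forcing the final configuration around each coding element to be determined by $X(e)$ together with computable data, which is precisely what your flipping scheme fails to ensure.
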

\begin{proof}
      Let $I_0,I_1,\ldots,I_N$ be all (isomorphism types of) $f$-blocks. We represent the structure $\mathcal B = (\omega,<,f)$ by $\alpha_f$ according to Remark \ref{representation}. As in Theorem~\ref{theo:finite-range}, we fix a $\Delta_2$ set $X$. %and its approximation $\xi(x,s)$.
    Our goal is to construct a computable copy $\mathcal{A} = (\omega,<_{\mathcal{A}})$ of $(\omega,<)$ such that $f_{\mathcal{A}} \equiv_T X$. In general, we follow the notations of Theorem~\ref{theo:finite-range} (e.g., $f_{\mathcal{A}_s}(x)$ is defined in the same way as in the previous proof).
    
    Beforehand, we choose a large enough number $M$ such that:
    \begin{itemize}
        \item $M$ lies at the right end of its $f$-block (inside $\mathcal{B}$),
        
        \item for every $x > M$, the isomorphism type of its $f$-block occurs infinitely often in $\mathcal{B}$.
    \end{itemize}
    As in the proof of Theorem~\ref{theo:finite-range}, we copy the interval $[0;M]$ into our structure $\mathcal{A}$, and all new elements will be added to the right of this interval.
    
    \smallskip
    
    The proof is split into three cases which depend on the properties of the string $\alpha_f$ (each of the cases requires a separate construction):
    \begin{enumerate}
        \item[(s)] There are two different finite strings $\sigma$ and $\tau$ such that:
        \begin{itemize}
            \item the lengths of $\sigma$ and $\tau$ are the same;
        
            \item $\tau$ can be obtained via a permutation of $\sigma$, i.e., there is a permutation $h$ of the set $\{0,1,\ldots,|\sigma|-1\}$ such that $\tau(i) = \sigma(h(i))$, for all $i<|\sigma|$;
        
            \item both $\sigma$ and $\tau$ occur infinitely often in $\alpha_f$.
        \end{itemize}
        %{\color{blue}}
        
        \item[(b)] There is only one block $I_k$ such that $k$ occurs infinitely often in $\alpha_f$.
        
        \item[(c)] Neither of the previous two cases holds.
    \end{enumerate}
    
    \smallskip
    
    \textsc{Case~(a).} For the sake of simplicity, we give a detailed proof for the case when $\sigma = 01$ and $\tau = 10$. After that, we explain how to deal with the general case.
    
    %Without loss of generality, we assume that $I_k = I_0$ and $I_l = I_1$.
    
    Our construction satisfies the following requirements:
    \[
        \begin{array}{l} 
            e\in X \ \Leftrightarrow\ 2e \text{ belongs to a block isomorphic to } I_1,\\
        e\not\in X \ \Leftrightarrow\ 2e \text{ belongs to a block isomorphic to } I_0.
   \end{array}
   \tag{$\mathcal{R}_e$}
    \]
    Suppose that $|I_0| + |I_1| = q+1$.
    
    \emph{Strategy $\mathcal{R}_e$ in isolation.} When $\mathcal{R}_e$ starts working at a stage $s_0+1$, we proceed as follows. Assume that $X_{s_0}(e) = 1$ (the other case is treated similarly). We choose $q$ fresh odd numbers $c^e_1,c^e_2,\ldots,c^e_q$ and declare them the \emph{companions} of $2e$. We add the chain
    \[
        2e <_{\mathcal{A}} c^e_1 <_{\mathcal{A}} c^e_2 <_{\mathcal{A}} \ldots <_{\mathcal{A}} c^{e}_q
    \]
    to the right of $\mathcal{A}_{s_0}$. If needed, we add finitely many fresh odd numbers in-between $\mathcal{A}_{s_0}$ and $2e$. This procedure ensures that (at the moment) the finite structure $([2e;c^e_q]_{\mathcal{A}}, <_{\mathcal{A}}, f_{\mathcal{A}})$ is isomorphic to the disjoint sum $I_1 + I_0$.
    
    The strategy $\mathcal{R}_e$ \emph{requires attention} at a stage $s$ if inside the current $\mathcal{A}_s$, the number $2e$ belongs to a copy of $I_{1-X_s(e)}$. When $\mathcal{R}_e$ requires attention, we apply a PtR-module.
    
    \emph{The PtR-module for $\mathcal{R}_e$.} We choose the middle interval $C$ as the set containing $2e$ and all its companions, i.e. $C = \{2e <_{\mathcal{A}} c^e_1 <_{\mathcal{A}} \ldots <_{\mathcal{A}} c^e_q\}$. Our target condition is defined as follows: inside the resulting structure $\mathcal{F}$, the structure $(C, <_{\mathcal{F}}, f_{\mathcal{F}}\upharpoonright C)$ is isomorphic to the disjoint sum $I_{X_s(e)} + I_{1-X_s(e)}$.
    As in Theorem~\ref{theo:finite-range}, the structure $\mathcal{F}$ is treated as output of the module.
    
    %\medskip
    
    \emph{The construction} is arranged similarly to that of  Theorem~\ref{theo:finite-range}.
    
    \emph{Verification.} We need to show that every application of a PtR-module is successful. This follows from two observations:
    \begin{enumerate}
        \item If we want to ``transform'', say, $I_0 + I_1$ into $I_1 + I_0$, then this can be achieved by an appropriate pushing to the right, since the string $\tau = 10$ occurs infinitely often in $\alpha_f$.
        
        \item  Remark~\ref{rem:inf-string-01} guarantees that one can also safely push the interval $D$ (from the PtR-module): notice that if some block $I_r$ occurs in $D$, then $r$ occurs  infinitely often in $\alpha_f$.
    \end{enumerate}
    Since pushing to the right is always successful, every requirement $\mathcal{R}_e$ is satisfied. Note that given $f_{\mathcal{A}}$ as an oracle, one can recover the $f_{\mathcal{A}}$-block of $2e$. This fact (together with $\mathcal{R}_e$-requirements) implies that $X \leq_T f_{\mathcal{A}}$.
    
    Every element $a\in\mathcal{A}$ is pushed to the right only finitely often. Therefore, the structure $\mathcal{A}$ is a computable copy of $(\omega, <)$.
    
    Given an odd number $x = 2k+1$, one can computably determine which of the following two cases holds:
    \begin{enumerate}
        \item $2k+1$ is a companion $c^e_t$ of some even number $2e$ (in this case, the indices $e$ and $t$ are also computed effectively), or
        
        \item $2k+1$ is added as a ``filler'' by some action of an $\mathcal{R}_e$-strategy (either by its initial actions, or by an application of a PtR-module).
    \end{enumerate}
    In the second case, the value $f_{\mathcal{A}_s}(x)$ never changes (after being defined for the first time). In the first case, the oracle $X$ can tell us whether $x = c^e_t$ belongs to (a copy of) $I_0$ or $I_1$, and $X$ can also compute the image $f_{\mathcal{A}}(x)$. In a similar way, $X$ computes the images $f_{\mathcal{A}}(2e)$, for $e\in\omega$. Hence, we obtain that $f_{\mathcal{A}} \equiv_T X$. This concludes the case when $\sigma = 01$ and $\tau = 10$.
    
    \smallskip
    
 %   {\color{blue}
        The case of arbitrary $\sigma$ and $\tau$ follows a similar proof outline. We illustrate this by considering $\sigma = 012301$ and $\tau = 013021$. Then our construction will switch between finite structures
        \[
            \mathcal{F}_{\sigma} = I_0 + I_1 + I_2 + I_3 + I_0 + I_1 \text{ and } \mathcal{F}_{\tau} = I_0 + I_1 + I_3 + I_0 + I_2 + I_1.
        \]
        Since both $\sigma$ and $\tau$ occur infinitely often in $\alpha_f$, an appropriate PtR-module can always ``transform'' $\mathcal{F}_{\sigma}$ into $\mathcal{F}_{\tau}$, and vice versa.
        
        During the construction, an even number $2e$ will always belong to the third block from the left inside $\mathcal{F}_{\square}$ (i.e., either $I_2$ in $\mathcal{F}_{\sigma}$, or $I_3$ in $\mathcal{F}_{\tau}$). %{\color{blue}
        The third block is chosen because it corresponds to the first position, where $\sigma$ and $\tau$ differ.%} 
        The rest of the corresponding copy of $\mathcal{F}_{\square}$ consists of companions of $2e$. In the final structure $\mathcal{A}$, we will achieve the following: if $e\in X$, then $2e$ lies in a copy of $I_2$; otherwise, $2e$ belongs to a copy of $I_3$. This concludes the discussion of Case~(a).
        
    \textsc{Case~(b).} 
    Without loss of generality, we assume that $I_k = I_0$. We satisfy the following requirements:
    \[
        \begin{array}{l} 
            e\in X \ \Leftrightarrow\ 2e \text{ lies at the right end of a copy of } I_0,\\
            e\not\in X \ \Leftrightarrow\ 2e \text{ lies at the left end of a copy of } I_0.
        \end{array}
        \tag{$\mathcal{R}_e$}
    \]
    Suppose that $|I_0| = q+1$. Notice that $q\geq 1$, since $f$ is not almost identity.
    
    \emph{Strategy $\mathcal{R}_e$ in isolation.}
    $2e$ will have finitely many odd numbers as its \emph{companions}. In contrast to Case~(a), these companions could be added stage-by-stage.
    
    When $\mathcal{R}_e$ starts working at a stage $s_0+1$, we proceed as follows. Suppose $X_{s_0}(e) = 1$ (the other case is similar). Then we choose $q$ fresh odd numbers $c_1,\ldots,c_{q}$, and declare that they are companions of $2e$. We set $c_1 <_{\mathcal{A}} \ldots <_{\mathcal{A}} c_q <_{\mathcal{A}} 2e$ (these elements are added to the right of $\mathcal{A}_{s_0}$). We ensure that the structure $([c_1;2e]_{\mathcal{A}}, <_{\mathcal{A}}, f_{\mathcal{A}})$ is isomorphic to $I_0$ (if needed, one adds fresh odd numbers in-between $\mathcal{A}_{s_0}$ and $c_1$).
    
    We also ensure that by the end of each stage $s$, $2e$ and its (current) companions form an interval inside $\mathcal{A}_s$, and this interval can be treated as a sum of blocks (in $\mathcal{A}_s$).
    
    The strategy $\mathcal{R}_e$ \emph{requires attention} at a stage $s$ if inside the current $\mathcal{A}_s$, the corresponding requirement is not satisfied (e.g., if $X_s(e) =0$ and $2e$ lies at the right end of $I_0$). When $\mathcal{R}_e$ requires attention, we apply a PtR-module.
    
    \emph{The PtR-module for $\mathcal{R}_e$.} 
    %{\color{blue} 
    We choose the middle interval $C$ as the set containing $2e$ and all its current companions. We consider the following two subcases.
    
    \emph{Subcase 1.} Assume that right now, $X_s(e) = 1$ and $2e$ lies at the left end of a copy of $I_0$. 
    %The other case is treated in a similar way.
    Then our target condition is defined as follows: inside the resulting output structure $\mathcal{F}$, the number $2e$ should belong to the right end of a copy of $I_0$.
    %As in Case~(a), we put $\mathcal{A}_{s+1} = \mathcal{F}$. 
    
    In order to achieve this condition, we add precisely $q$ fresh odd numbers in-between $B$ and $C$, and only one fresh odd number in-between $C$ and $D$. This guarantees that $2e$ ``moves'' to the right end of a block.
    
    \emph{Subcase 2.} Otherwise, suppose that $X_s(e) = 0$ and $2e$ lies at the right end of a copy of $I_0$. Then we pursue the following condition: inside the output $\mathcal{F}$, $2e$ should ``move'' to the left end of a block $I_0$.
    
    In order to do this, we add one fresh number in-between $B$ and $C$, and $q$ fresh numbers in-between $C$ and $D$.
    
    In both subcases, we declare that the newly added odd numbers belong to the set of companions of $2e$.
    %}
    
    %Recall that $d^0$ is the left end of the interval $D$ from the general description of a PtR-module. We declare that all newly added elements $x$ such that  
     %   $B<_{\mathcal{A}} x <_{\mathcal{A}} C$  or $C<_{\mathcal{A}} x<_{\mathcal{A}} d^0$
    %also belong to the set of companions of $2e$.
    %\medskip
    
    \emph{The construction} is arranged similarly to the previous ones.
    
    \emph{Verification.} %{\color{blue}
    Since almost every block from $\alpha_f$ is isomorphic to $I_0$, every application of a PtR-module is successful. In addition, the actions of the PtR-module for $\mathcal{R}_e$ does not injure other strategies.
    %}
    
    %We emphasize that since every considered block is a copy of $I_0$,  the PtR-module is not required to add fresh numbers $x >_{\mathcal{A}} d^0$. Therefore, 
    
    We deduce that all requirements $\mathcal{R}_e$ are satisfied. Given $f_{\mathcal{A}}$ as an oracle, one can recover the position of $2e$ inside its $f_{\mathcal{A}}$-block. This implies that $X \leq_T f_{\mathcal{A}}$. In addition, a standard argument shows that $\mathcal{A}$ is a computable copy of $(\omega, <)$.
    
    Notice the following. Since $2e$ and its companions always stick together as an interval, there are only two possible variants of the final $f_{\mathcal{A}}$-block of $2e$: either it contains $q$ companions of $2e$ added at the very beginning of the work of the $\mathcal{R}_e$-strategy, or it contains $q$ closest (inside $\mathcal{A}$) companions of $2e$ added by the first application of the PtR-module for $\mathcal{R}_e$. 
    
    As in the previous case, given an odd number $x = 2k+1$, one can determine which of the following two cases holds:
    \begin{enumerate}
        \item $x$ is a companion of some even number $2e$ (the index $e$ is recovered effectively), or
        
        \item $x$ is added as a ``filler'' by some action of an $\mathcal{R}_e$-strategy.
    \end{enumerate}
    In the second case, the value $f_{\mathcal{A}_s}(x)$ never changes. In the first case, the oracle $X$ can tell us the content of the final $f_{\mathcal{A}}$-block containing $x$: indeed, if $X_{s_0}(e) = X_{s_1}(e)$, then at the stages $s_0$ and $s_1$, the blocks of $x$ inside $\mathcal{A}_{s_0}$ and $\mathcal{A}_{s_1}$ contain precisely the same elements. We deduce that $f_{\mathcal{A}} \leq_T X$. This concludes the proof of Case~(b).
    
    \medskip
    
    \textsc{Case~(c).} Before describing the construction, we provide a combinatorial analysis of the string $\alpha_f$.
    
    \begin{lemma}\label{lem:combin-analysis}
        If the string $\alpha_f$ satisfies neither Case~(a) nor Case~(b), then there are symbols $b,d,e \in \Sigma$ such that $d\neq b$, $e\neq b$, and for every natural number $n$, there exists $m > n$ such that the finite string $db^me$ occurs in $\alpha_f$.
    \end{lemma}
    \begin{proof}
        Without loss of generality, one may assume that every symbol from $\Sigma$  occurs infinitely often in $\alpha_f$.
    
        For a finite string $\sigma$ over the alphabet $\Sigma$, we denote
        \[  
            \#(\sigma) = | \{ a\in \Sigma\,\colon a \text{ occurs in } \sigma\} |.
        \]
        We choose a finite string $\tau$ such that $\tau$ occurs infinitely often in $\alpha_f$ and
        \begin{equation}\label{equ:combin-proof}
            \#(\tau) = \max\{ \#(\sigma)\,\colon \sigma \text{ occurs infinitely often in } \alpha_f\}.
        \end{equation}
        Let $c$ be the last symbol of the string $\tau$.
        
        There exists a symbol $b$ such that the string $\tau_b = \tau\,b$ occurs infinitely often in $\alpha_f$. Equation (\ref{equ:combin-proof}) implies that $b$ occurs in $\tau$ (indeed, if $b$ does not occur in $\tau$, then $\#(\tau_b) = \#(\tau) + 1$).
        
        We prove that $c = b$. Towards a contradiction, assume that $c\neq b$. Then $\tau$ can be decomposed as $\tau = \xi\, b\, \delta \, c^k$ for some $k\geq 1$ and finite strings $\xi,\delta$. The string $\tau_b = \xi\, b\, \delta \,c^k\,b$ occurs infinitely often in $\alpha_f$. In turn, this implies that both $b\,\delta\,c^k$ and $\delta\,c^k\,b$ occur infinitely often in $\alpha_f$. Therefore, $\alpha_f$ satisfies Case~(a), which gives a contradiction.
        
        Hence, we have $\tau = \rho\,b^k$ for some $k\geq 1$ and finite string $\rho$, and the string $\tau_b = \rho\,b^{k+1}$ occurs infinitely often in $\alpha_f$. Note that $\#(\tau_b) = \#(\tau)$. This implies that by applying induction, one can show that for \emph{every} $l\geq 1$, 
        \begin{equation}\label{equ:combin-proof-2}
            \rho\, b^l \text{ occurs infinitely often in } \alpha_f.
        \end{equation}
        
        Since $\alpha_f$ does not satisfy Case~(b), there are at least two different symbols occuring infinitely often in $\alpha_f$. This fact and (\ref{equ:combin-proof-2}) imply that for every $n\in\omega$, there exist $m > n$ and two symbols $d'$ and $e'$ such that $d'\neq b$, $e'\neq b$, and $d'b^m e'$ occurs in $\alpha_f$. After that,  we apply the pigeonhole principle to finish the proof of the lemma.
    \end{proof}
    
    By Lemma~\ref{lem:combin-analysis}, we may assume that for every $n\in\omega$, there exists $m > n$ such that, say, $10^m2$ occurs in $\alpha_f$. We satisfy the same requirements as in Case~(b):
    \[
        \begin{array}{l} 
            e\in X \ \Leftrightarrow\ 2e \text{ lies at the right end of a copy of } I_0,\\
            e\not\in X \ \Leftrightarrow\ 2e \text{ lies at the left end of a copy of } I_0.
        \end{array}
        \tag{$\mathcal{R}_e$}
    \]
    In general, our notations also follow those of Case~(b).
    
    \emph{Strategy $\mathcal{R}_e$ in isolation.}
    When $\mathcal{R}_e$ starts working at a stage $s_0+1$, we proceed as follows. Suppose $X_{s_0}(e) = 0$. We find a large enough number $m$ such that $10^m2$ occurs in $\alpha_f$, and the corresponding sequence of $f$-blocks $I_1 + I_0 + I_0 + \ldots + I_0 + I_2$ does not intersect with the image of $\mathcal{A}_{s_0}$ inside $(\omega,<)$. 
    
    We add $2e$ and fresh odd numbers into $\mathcal{A}$ ensuring that the newly added elements form a sequence of $f_{\mathcal{A}}$-blocks:
    \[
        I_1 + \underbrace{I_0 + \ldots + I_0}_{m \text{ times}} + I_2;
    \]
    if needed, fresh odd numbers are also added in-between $\mathcal{A}_{s_0}$ and this sequence. The number $2e$ lies at the left end of the leftmost block $I_0$. The elements forming $I_1$ and $I_2$ are declared \emph{boundary companions} of $2e$. The odd numbers forming the inner sequence of $I_0$-s are declared \emph{non-boundary companions} of $2e$.

    As usual, $\mathcal{R}_e$ \emph{requires attention} at a stage $s$ if inside the current $\mathcal{A}_s$, the corresponding requirement is not satisfied. When $\mathcal{R}_e$ requires attention, we apply a PtR-module.
    
    \emph{The PtR-module for $\mathcal{R}_e$.} 
    %{\color{blue} 
    We choose the middle interval $C$ as the set containing $2e$ and all its companions. Assume that right now, $X_s(e) = 0$ and $2e$ lies at the right end of a copy of $I_0$ (the other subcase is treated in a similar way). Then the target condition is defined as follows: inside  $\mathcal{F}$, the number $2e$ belongs to the left end of a copy of $I_0$.
    
    Suppose that right now, the companions of $2e$ form a sequence of $f_{\mathcal{A}_s}$-blocks corresponding to a finite string $10^m2$. 
    
    We always assume the following: if a fresh number $x$ is added between some companions of some $2j$, then it is declared a non-boundary companion of $2j$. In addition, every such $x$ is put between the $I_1$-block and the $I_2$-block containing the boundary companions of $2j$. Moreover, we require that inside the resulting structure $\mathcal{F}$, the element $x$ becomes a part of a copy of $I_0$.
    
    In order to achieve the target condition, we proceed as follows. First, we find a large enough $m' > m$ such that $10^{m'}2$ occurs in $\alpha_f$, and this occurrence of $10^{m'}2$ lies to the right of the image of $\mathcal{A}_s$ inside $(\omega, <)$. We add fresh odd numbers in such a way that:
    \begin{itemize}
        \item The companions of $2e$ (including newly added companions) form a sequence of $f_{\mathcal{F}}$-blocks corresponding to $10^{m'}2$ (inside $\alpha_f$). This is achieved by adding numbers in-between $B$ and $C$, and by adding fresh $I_0$-blocks between the $I_1$-block and the $I_2$-block containing the boundary companions of $2e$.
        
        \item Similarly to Case~(b), this procedure must ensure that $2e$ moves to the left end of an $I_0$-block.
    \end{itemize}
    
     Second, we carefully push the companions of $2j$, where $e < j <s$, to the right. Consider each such $j$ (in turn).  Suppose that the companions of $2j$ form a sequence of $f_{\mathcal{A}_s}$-blocks corresponding to a finite string $10^{m_j}2$. 
     We choose a large enough $m'_j > m_j$ (again, with $10^{m'_j}2$ occuring in $\alpha_f$ to the right of the image of the current (preliminary) version of $\mathcal{F}$). We add fresh numbers in such a way that:
     \begin{itemize}
         \item The companions of $2j$ (including new ones) form a sequence of $f_{\mathcal{F}}$-blocks corresponding to $10^{m'_j}2$ inside $\alpha_f$.
         
         \item If $x$ is a new companion of $2j$, then it belongs to a new $I_0$-block which corresponds to one of the underlined zeros in the following decomposition:
         \[
            10^{m'_j} 2 = 1 0^{m_j} \underline{0} \underline{0} \ldots \underline{0} 2.
         \]
     \end{itemize}
     This careful pushing allows to ensure that the PtR-module does not injure strategies $\mathcal{R}_j$, for $j \neq e$. Indeed, after the pushing, the value $f_{\mathcal{A}}(2j)$ does not change.
     %}
     
  %   This condition guarantees that $\mathcal{R}_e$ does not injure other strategies: for $\mathcal{R}_i$, where $i\neq e$, consider the $\subseteq$-least interval $J_i$ containing all companions of $2i$. Then $f_{\mathcal{A}}\upharpoonright J_i$ does not change after the application of $\mathcal{R}_e$-PtR-module.
    
   % Here we also require an additional condition: if a new element $x\in \mathcal{F}$ is added in-between some companions of $2i$ (where $i\neq e$), then this $x$ should lie to the right of the rightmost non-boundary companion of $2i$, and there should be precisely $|I_2|$ boundary companions to the right of $x$. In addition, any such $x$ should be a part of an $f_{\mathcal{F}}$-block of type $I_0$. After adding these $x$, we declare them non-boundary companions of $2i$.
    
    %We put $\mathcal{A}_{s+1} = \mathcal{F}$. 
    
%    Similarly to Case~(b), we declare that the newly added elements $x$ such that  
%        $B<_{\mathcal{A}} x <_{\mathcal{A}} C$  or $C<_{\mathcal{A}} x<_{\mathcal{A}} d^0$
%    belong to the set of non-boundary companions of $2e$.

    %\medskip
    
    \emph{The construction} is arranged in a similar way as before.
    
    \emph{Verification.} The fact that $\alpha_f$ contains occurrences of $10^m2$ for arbitrarily large $m$ implies that every application of a PtR-module is successful.  We deduce that all requirements $\mathcal{R}_e$ are satisfied.
  The rest of the verification is similar to that of Case~(b).
    This concludes the proof of Theorem~\ref{theorem:block-functions}.
\end{proof}

\subsection{Quasi-block functions}\label{sec:qbf}
The notion of a quasi-block function is a generalization of the notion of a block function. Unlike blocks which are disjoint and follow each other, quasi-blocks are increasingly larger and they are initial segments of $\omega$.

\begin{definition}\label{def:quasi_block_function}

We say that $f: \omega \to \omega$ is a quasi-block function if there are arbitrarily long finite initial segments of $\omega$ closed under $f$. If $f$ is a quasi-block function but not a block function, we call $f$ a proper quasi-block function. 

\end{definition}

\begin{example}

Euler's function is a function $\varphi$ such that if $n>0$, then $\varphi(n)$ is the number of such $m \leq n$ that $m$ and $n$ are relatively prime. $\varphi$ is a proper quasi-block function. Since $\varphi$ has a computable non-decreasing lower bound $\lfloor\sqrt{\frac{n}{2}}\rfloor$ diverging to $\infty$, the spectrum of $\varphi$ is equal to the c.e. degrees by Theorem \ref{thm:bound}.

\end{example}

\begin{example}

The function $nd: \omega \to \omega$ assigning to each $n>0$ the number of its divisors is a proper quasi-block function. %One can prove that the spectrum of this function is equal to all $\Delta_2$ degrees. We do not provide a proof due to space constraints.

\end{example}

Below we describe a method used to show that the degree spectrum of a certain unary recursive function $f$ consists exactly of c.e. degrees.

\begin{description}
\item[Retrieving the Successor module (RS)] on $(\omega,<,f )$, for $f$ recursive, is a scheme of algorithm which, for any computable copy $\mathcal{A}$ of $(\omega,<)$ and an initial segment $\mathcal I_t$ of $\mathcal A$ satisfying some condition $\mathcal R$ (to be specified in a concrete implementation) computes, uniformly in $t$ and relative to $f_\mathcal{A}$, a longer initial segment $\mathcal I_{t+1}$ of $\mathcal{A}$ satisfying $\mathcal R$, which enables us to construct an increasing sequence of initial segments $\mathcal I_0 \subset \mathcal I_1 \subset \ldots$.%, each satisfying a certain condition $\mathcal{R}$. %Suppose after $t$ steps we have an interval $\mathcal{I}_t$ satisfying $\mathcal{R}$. We wish to describe an algorithmic method which allows us to construct a finite interval $\mathcal{I}_{t+1}$ satisfying $\mathcal{R}$ such that $\mathcal{I}_t \subseteq \mathcal{I}_{t+1}$.

\end{description}
Suppose that there exists a concrete implementation of the RS-module for $(\omega,<,f)$.
 We wish to show that the degree spectrum of $f$ on $(\omega, <)$ consists of exactly c.e. degrees. To this aim, we want to show that $Succ_{\mathcal{A}}$ is Turing-reducible to $f_{\mathcal{A}}$. We also observe that the reduction in the other direction works. We conclude that $Succ_{\mathcal{A}} \equiv_T f_{\mathcal{A}}$, hence $DgSp(Succ)=DgSp(f)$, i.e. they consist of all c.e. degrees. This conclusion is based on Proposition \ref{theorem:DgSp_Succ}.

Suppose that the initial segment of $\omega$ up to $n$ (according to $<$) has already been determined, along with its isomorphic image $\mathcal I_t$ in $(\omega, \prec)$. In this description we adopt a convention that the isomorphic image of each number $i$ is $k_i$. Observe that for each number $i$ such that $k_i \prec k_n$ we know how to determine its successor in $(\omega, \prec)$. In an application of the RS-module, given $k_n$---the rightmost element of $\mathcal I_t$---we get some $k_m$ and $m$ such that $k_n \prec k_m$ and $[k_0;k_m]_\mathcal{A}$ satisfies $\mathcal R$. We know that in $\mathcal{A}$ there are exactly $m-n-1$ elements between $k_n$ and $k_m$. Since the ordering $\prec$ is recursive, we can check elements one by one until we determine what elements (and in what order) are between $k_n$ and $k_m$. Thus we extended the initial segment $\mathcal{I}_t$ of $\mathcal{A}$ to a larger initial segment $\mathcal{I}_{t+1}$ satisfying $\mathcal{R}$ and we are able to retrieve more values of the successor in this structure.

\begin{theorem}\label{theorem:non_quasi_block}
The spectrum of any unary total computable non-quasi-block function is equal to the c.e. degrees.

\end{theorem}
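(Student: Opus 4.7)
\emph{Proof plan.} I would combine Wright's Theorem \ref{thm:wright} for the lower bound with an instance of the RS-module framework of Section \ref{sec:qbf} for the upper bound.

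For the lower bound, I would first check that a non-quasi-block $f$ cannot be intrinsically computable. By Proposition \ref{theorem:trivial}, intrinsic computability means $f$ is almost constant or almost identity; in either case all sufficiently long initial $<$-segments of $\omega$ are closed under $f$ (once $n$ exceeds both the constant value and the stability threshold, or once $n$ exceeds the identity-stability threshold). This contradicts the non-quasi-block hypothesis. Hence, by Theorem \ref{thm:wright}, $DgSp(f)$ contains every c.e. degree.

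For the upper bound, fix a computable copy $\mathcal{A}=(\omega,\prec)$ and let $\varphi\colon(\omega,<)\to\mathcal{A}$ be the isomorphism, writing $k_i = \varphi(i)$. I would aim to show $f_{\mathcal{A}} \equiv_T Succ_{\mathcal{A}}$, after which Proposition \ref{theorem:DgSp_Succ} forces $\deg(f_{\mathcal{A}})$ into the c.e. degrees. One direction, $f_{\mathcal{A}} \leq_T Succ_{\mathcal{A}}$, is routine: hard-coding $k_0$ into the reduction lets us iterate $Succ_{\mathcal{A}}$ to reconstruct $\varphi$, and then $f_{\mathcal{A}}(x) = \varphi(f(\varphi^{-1}(x)))$ is computable since $f$ is. The substantive direction $Succ_{\mathcal{A}} \leq_T f_{\mathcal{A}}$ is where the non-quasi-block hypothesis enters, via an RS-module. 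Pick $N$ large enough that for every $n \geq N$ the segment $[0;n]$ is not closed under $f$, and set $M_n := \max\{f(i):i\leq n\}$, so $M_n > n$. Take $\mathcal{R}$ to be the condition that $\mathcal{I}$ is a $\prec$-initial segment of $\mathcal{A}$ of length $n+1 \geq N+1$ together with the correspondence $i \mapsto k_i$. The initial $\mathcal{I}_0$ is a finite object I would hard-code into the reduction (permitted since Turing reductions need not be uniform in $\mathcal{A}$). To extend $\mathcal{I}_t = \{k_0,\ldots,k_n\}$, pick any $i^*\leq n$ with $f(i^*) = M_n$, query the oracle to obtain $k_{M_n} = f_{\mathcal{A}}(k_{i^*})$, and then enumerate $\{y : k_n \prec y \prec k_{M_n}\}$ using the computable $\prec$. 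Since the order type is $\omega$, this set has exactly $M_n - n - 1$ elements; sorting them by $\prec$ identifies $k_{n+1},\ldots,k_{M_n-1}$, yielding $\mathcal{I}_{t+1}$ satisfying $\mathcal{R}$.

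The only point requiring care is that the extension succeeds at every stage: one needs $n \geq N$ throughout, which is guaranteed because $\mathcal{I}_0$ already has length $\geq N+1$ and every extension strictly increases length. Once the RS-module is in place, iterating it reconstructs $\varphi$ on all of $\omega$ computably in $f_{\mathcal{A}}$, and thus computes $Succ_{\mathcal{A}}$, closing the upper bound. The main conceptual obstacle is really just the combinatorial one of seeing that being non-quasi-block is exactly the property that makes the RS-module available; beyond that, everything reduces to manipulating the computable ordering $\prec$.
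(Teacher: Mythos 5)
Your proposal is correct and follows essentially the same route as the paper: the substantive step is an RS-module implementation in which, beyond a finite threshold, no initial segment is closed under $f$, so querying $f_{\mathcal A}$ at an element whose image exceeds the current segment yields $k_{M_n}$ and the computable order fills in the intervening elements, giving $Succ_{\mathcal A}\equiv_T f_{\mathcal A}$ and hence the c.e. degrees via Proposition~\ref{theorem:DgSp_Succ}. The only (harmless) difference is that you invoke Theorem~\ref{thm:wright} and Proposition~\ref{theorem:trivial} for the lower bound, which is already implied by the equivalence $DgSp(f)=DgSp(Succ)$ that the paper's RS-module framework delivers in one stroke.
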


\begin{proof}

We show that the RS module can be used for $(\omega,<,f)$. Given a computable copy $\mathcal A$ of $(\omega,<)$, we set $\mathcal{I}_0$ as the image of some initial segment of $(\omega,<)$ such that for every $n$ outside $\mathcal{I}_0$ there is $m \leq n$ such that $f(m)> n$. The condition $\mathcal{R}$ states that there is $j \in \mathcal{I}_t$ such that $f(j)>n$. 
Then if we already know $\mathcal{I}_t$ and want to determine $\mathcal{I}_{t+1}$, we calculate both $f(j)$ and $f_{\mathcal{A}}(k_j)$ from the condition $\mathcal{R}$, obtaining some values of these functions $m$ and $k_m$, each of them somewhere behind $n$ and $k_n$ in their sequences.
\end{proof}

\begin{theorem}\label{thm:bound}
If $f$ is a recursive  proper quasi-block function with a computable non-decreasing lower bound diverging to $+\infty$, then its spectrum consists of exactly c.e. degrees.
\end{theorem}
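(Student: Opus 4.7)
The proof splits into showing both inclusions. For $\textrm{c.e.\ degrees}\subseteq DgSp(f)$, note the hypothesis forces $f$ to be non-trivial: $f$ is not almost constant because $b(n)\to\infty$ makes $f$ unbounded, and $f$ is not almost identity because any almost-identity function is a block function (each fixed point is a singleton block and the finitely many non-fixed points form finitely many finite blocks), whereas $f$ is proper quasi-block. Hence Proposition~\ref{theorem:trivial} gives that $f$ is not intrinsically computable, and Theorem~\ref{thm:wright} then delivers all c.e.\ degrees into $DgSp(f)$.

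For $DgSp(f)\subseteq\textrm{c.e.\ degrees}$, my plan is to show $f_\mathcal{A}\equiv_T Succ_\mathcal{A}$ in every computable copy $\mathcal{A}$ of $(\omega,<)$; Proposition~\ref{theorem:DgSp_Succ} then forces $\deg(f_\mathcal{A})$ to be c.e. The reduction $f_\mathcal{A}\leq_T Succ_\mathcal{A}$ is straightforward: using $Succ_\mathcal{A}$ (and $Succ_\mathcal{A}^{-1}$, which is c.e.\ in $Succ_\mathcal{A}$) one enumerates $\mathcal{A}$ in $<_\mathcal{A}$-order, reads off the position of any element, and translates through the recursive $f$. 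The converse $Succ_\mathcal{A}\leq_T f_\mathcal{A}$ will come from the RS-module of Section~\ref{sec:qbf}, with condition $\mathcal{R}$ asserting that $\mathcal{I}_t=[k_0;k_{m_t}]_\mathcal{A}$ is an initial segment of known length $m_t+1$ with $[0;m_t]$ closed under $f$ in $(\omega,<)$.

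The lower bound enters as follows. Every value $q$ satisfies $f^{-1}(q)\subseteq[0;b^{-1}(q)]$, a finite set whose cardinality $|f^{-1}(q)|$ is uniformly computable in $q$; hence $f_\mathcal{A}^{-1}(k_q)$ is enumerable from the $f_\mathcal{A}$ oracle by searching $c\in\omega$ with $f_\mathcal{A}(c)=k_q$ and halting once $|f^{-1}(q)|$ distinct witnesses are found. Given $\mathcal{I}_t$, the quasi-block property lets one compute the next $m'>m_t$ with $[0;m']$ closed under $f$; the preimages $f_\mathcal{A}^{-1}(k_q)$ for $k_q\in\mathcal{I}_t$ recover all elements of $[k_0;k_{m'}]_\mathcal{A}$ that $f_\mathcal{A}$-map into $\mathcal{I}_t$, and iterating backward/forward $f_\mathcal{A}$-closure together with the exact counts $|f^{-1}(q)|$ for $q\leq m'$ assembles the intended segment. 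The main obstacle will be disentangling the initial segment $[k_0;k_{m'}]_\mathcal{A}$ from other $f_\mathcal{A}$-closed supersets of $\mathcal{I}_t$: elements of $[k_0;k_{m'}]_\mathcal{A}$ that lie in $f$-orbits disjoint from $[0;m_t]$ (e.g., fixed points inside $(m_t,m']$) cannot be reached by $\mathcal{I}_t$-based closure alone, so one must also use the recursive order $<_\mathcal{A}$ together with the positional upper bound $b^{-1}(m')$ to identify such stragglers inside the interval before certifying the extension of $\mathcal{I}_t$ to $\mathcal{I}_{t+1}=[k_0;k_{m'}]_\mathcal{A}$.
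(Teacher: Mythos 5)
Your overall architecture ($f_\mathcal{A}\equiv_T Succ_\mathcal{A}$ in every copy, via the RS-module, with the computable bound used to compute the exact cardinalities $|f^{-1}(q)|$ and hence to enumerate $f_\mathcal{A}$-preimages with certainty) matches the paper's, and your route to the inclusion of the c.e.\ degrees via Proposition~\ref{theorem:trivial} and Theorem~\ref{thm:wright} is a fine alternative to reading it off from $DgSp(Succ)$. But the hard direction $Succ_\mathcal{A}\le_T f_\mathcal{A}$ has a genuine gap, and you have misplaced the obstacle. The ``stragglers'' you worry about are harmless: once a single element $k_{m'}$ is certified together with its exact position $m'$, every element at a position below $m'$ --- whether or not its $f$-orbit meets $\mathcal{I}_t$ --- is recovered by exhaustive search through the computable order $<_\mathcal{A}$, since you know exactly how many elements lie between $k_{m_t}$ and $k_{m'}$; this is precisely the filling step of the RS-module. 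The real problem is certifying \emph{any} element beyond $k_{m_t}$ with a known position in the first place. Your only tools for this are forward images (whose positions are known since $f$ is computable) and preimages with exact counts; both fail exactly when the current segment $[0;m_t]$ is closed under both $f$ and $f^{-1}$, and the appeal to $<_\mathcal{A}$ together with $b^{-1}(m')$ does not repair this, because positions of unknown elements are not computable from the recursive order alone.

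The missing ingredient is the observation with which the paper's proof begins: since $f$ is a \emph{proper} quasi-block function, i.e.\ not a block function, only finitely many initial segments of $(\omega,<)$ are closed under both $f$ and $f^{-1}$ (infinitely many such segments would decompose $\omega$ into $f$-blocks in the sense of Definition~\ref{def:block_function}). Hence one may hardcode $\mathcal{I}_0$ past the last such segment; from then on every certified segment either sends some element out (take its image) or receives some element from outside (take the $<_\mathcal{A}$-largest preimage, identified using the exact count of preimages), and alternating these two steps strictly extends the certified segment, so the module never stalls and, if you wish, can be run until the next $f$-closed boundary. Note that your outline never uses properness in the hard direction, and it cannot be completed without it: $f(n)=2\lfloor n/2\rfloor$ satisfies every hypothesis you actually invoke there (quasi-block, computable non-decreasing diverging lower bound), yet by Theorem~\ref{theorem:block-functions} its spectrum is all of the $\Delta_2$ degrees, so no reduction $Succ_\mathcal{A}\le_T f_\mathcal{A}$ can hold in all copies; in that example every even-length initial segment is closed under both $f$ and $f^{-1}$, and your closure procedure certifies nothing beyond $\mathcal{I}_t$.
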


\begin{proof}

We claim that there exist only finitely many quasi-blocks closed under both $f$ and $f^{-1}$. Observe that if there were infinitely many such quasi-blocks, then $f$ would be a block function.
Observe also that if $f$ is as above, then we are able to calculate how many times each of its values is assumed.

We utilise the RS module. The segment $\mathcal{I}_0$ is any initial segment such that none of its super-quasi-blocks is closed under both $f$ and $f^{-1}$. Assume we already have a segment $\mathcal{I}_t$ of $\mathcal{A}$ retrieved. $\mathcal{I}_t$ satisfies the condition $\mathcal{R}$ stating that it is a initial segment which is not closed under both $f$ and $f^{-1}$. 

We wish to algorithmically construct $\mathcal{I}_{t+1}$ satisfying the same condition $\mathcal{R}$. If there is $n \in \mathcal{I}_t$ such that $f(n)>\mathcal{I}_t$, we set $\mathcal{I}_{t+1}$ as the segment consisting of all elements up to $f(n)$. If not, then there must be $m \in \mathcal I_t$ such that for some $n>\mathcal{I}_t$, $f(n)=m$. What is more, for every such $m$ there are only finitely many arguments satisfying this identity and we are able to determine what they are. If $M$ is the largest of these elements, then we set $\mathcal{I}_{t+1}$ as the segment until $M$.
\end{proof}
\begin{restatable}{theorem}{theoremMichal}\label{thmMichal}
There exists a recursive quasi-block function $f$ with a non-decreasing lower bound diverging to $+\infty$ but with no such computable bound with all c.e. degrees as a spectrum.

\end{restatable}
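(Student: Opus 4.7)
The plan is to define $f$ as a perturbation of the predecessor function: set $f(0)=0$ and $f(n)=n-1$ for $n\geq 1$, except at a computable sparse set of \emph{dip positions} $\{p_k\}_{k\geq 1}$ where $f(p_k)=s_k$ for carefully chosen values $s_k<p_k-1$. Because $f(n)\leq n$ always, every initial segment $[0,N]$ of $\omega$ is $f$-closed, so $f$ is quasi-block. The chain $0\leftarrow 1\leftarrow 2\leftarrow\cdots$ witnesses that the backward $f$-orbit of $0$ equals $\omega$, so $f$ is not a block function. Provided the $s_k$ are pairwise distinct and satisfy $s_k\geq k$, only finitely many $n$ satisfy $f(n)\leq M$ for any fixed $M$, so $\liminf_n f(n)=\infty$ and $f$ admits a non-decreasing lower bound diverging to $+\infty$.

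To rule out every computable such bound, I would satisfy, for each $e$, the requirement $N_e$: ``$\varphi_e$ is not a non-decreasing lower bound of $f$.'' Assign to $N_e$ a fresh dip position $p_{k(e)}$, larger than every witness used so far; wait for $\varphi_e(p_{k(e)})$ to converge to some $K$; if monotonicity fails on the points seen so far, or $K>p_{k(e)}-1$, then $\varphi_e$ is not a lower bound and no action is needed; otherwise set $s_{k(e)}:=\max(k(e),K-1)$, which is at most $p_{k(e)}-2$ and strictly less than $K\leq\varphi_e(p_{k(e)})$. The condition $s_{k(e)}\geq k(e)$ forces $s_k\to\infty$, and pairwise distinctness of the $s_k$ can be maintained by nudging each new target up to the next unused level (without damaging the strict inequality). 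Since $f$ is plainly neither almost constant nor almost identity, Proposition~\ref{theorem:trivial} together with Theorem~\ref{thm:wright} gives $DgSp(f)\supseteq$ the c.e. degrees.

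For the reverse inclusion I would show $f_{\mathcal{A}}\equiv_T Succ_{\mathcal{A}}$ in every computable copy $\mathcal{A}$; the reduction $f_{\mathcal{A}}\leq_T Succ_{\mathcal{A}}$ is routine. For $Succ_{\mathcal{A}}\leq_T f_{\mathcal{A}}$, I inductively identify the initial segment $\mathcal{I}_t=\varphi([0,n])$ of $\mathcal{A}$ and extend by one element. Using $f$, decide whether $n+1$ is a dip. If $n+1$ is not a dip, then $f(n+1)=n$ and $|f^{-1}(n)|\in\{1,2\}$ is computable (it equals $2$ precisely when some $s_k=n$, in which case $p_k>n+1$); enumerate $y\in\omega$ with $f_{\mathcal{A}}(y)=\varphi(n)$ until the known count is reached, and $\varphi(n+1)$ is the $<_{\mathcal{A}}$-least of them. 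If instead $n+1=p_j$ is a dip, compute $s_j<n$; $\varphi(s_j)\in\mathcal{I}_t$ is already known, $|f^{-1}(s_j)|\in\{1,2\}$ is again computable, and the unique element of $f_{\mathcal{A}}^{-1}(\varphi(s_j))$ lying outside $\mathcal{I}_t$ is $\varphi(p_j)=\varphi(n+1)$.

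The main obstacle is harmonising the diagonalization with the decoding: the dips must defeat every $\varphi_e$ while their target values $\{s_k\}$ remain pairwise distinct and diverge, and no dip may spoil the uniqueness used to identify $\varphi(n+1)$ from $f_{\mathcal{A}}$-preimages. This interaction is handled by a routine finite-injury priority argument: each $N_e$ is assigned fresh positions and target values, and the freedom to push each $s_k$ upward by finitely many values keeps all constraints simultaneously satisfied.
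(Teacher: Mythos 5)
There is a genuine gap, and it sits at the heart of the statement: the clause ``no \emph{computable} non-decreasing divergent lower bound'' provably fails for the function you build. You place the dips at a computable set of positions $p_1<p_2<\cdots$ and insist that the dip values satisfy $s_k=f(p_k)\geq k$. But then the optimal lower bound $h(n)=\min_{m\geq n}f(m)$ is itself computable: given $n$, let $m_0$ be the least non-dip $\geq n$; every non-dip $m\geq n$ contributes $f(m)=m-1\geq m_0-1$, while a dip $p_k\geq n$ contributes $s_k\geq k$, so only the finitely many indices $k<m_0-1$ can possibly beat $m_0-1$, and for these both $p_k$ and $s_k$ are computable (being values of the computable $f$ at computably located positions). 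Hence $h(n)=\min\bigl(\{m_0-1\}\cup\{s_k\,\colon k<m_0-1,\ p_k\geq n\}\bigr)$ is computable, non-decreasing and divergent --- exactly the kind of bound you set out to exclude (your $f$ then falls under Theorem~\ref{thm:bound}, which gives the spectrum, but not the theorem). The same tension surfaces inside your diagonalization: if $\varphi_e$ is a very slowly divergent bound, then $K=\varphi_e(p_{k(e)})$ may well satisfy $K\leq k(e)$, and your assignment $s_{k(e)}=\max(k(e),K-1)=k(e)\geq K$ does \emph{not} defeat $\varphi_e$; moving to later witnesses does not help, since the inequality $K\leq k$ can persist forever. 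Dropping the floor $s_k\geq k$ is not an option within your scheme either: any construction in which the positions carrying values $\leq M$ are computably bounded in $M$ yields a computable bound by the argument above. (A secondary issue: ``wait for $\varphi_e(p_{k(e)})$ to converge'' must not postpone defining $f(p_{k(e)})$, or $f$ is not total computable; this forces default values and moving witnesses, which your closing finite-injury remark does not actually resolve --- but the structural obstruction is the real one.)

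What the theorem requires --- and what the paper does --- is to make the positions at which small values occur \emph{non}-computably bounded. The paper fixes a noncomputable c.e.\ set $A$ with a repetition-free computable enumeration $g$ and lets the non-fixed-point values of $f$ be exactly $g(0),g(1),\ldots$, placed at ever larger arguments, with fixed points inserted as fillers so that every finite initial segment is closed under $f$ (quasi-block, but not block). Each value is then attained only finitely often, so a non-decreasing divergent lower bound exists; but a computable one would computably bound the position at which a given $n$ can appear as a value and hence decide $A$. Your spectrum argument (Wright for the lower inclusion, successor-recovery for the upper) is in the same spirit as the paper's RS-module, except that the paper identifies the next piece of the copy via the \emph{unique} element beyond the current initial segment that maps back into it (injectivity of $g$), whereas your decoding leans on a computable count of preimages --- a convenience available only because of the very computability that ruins the bound clause.
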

\begin{proof}

Consider a set $A \subseteq \omega$ which is $\Delta_2$ but not computable. Observe that for each such set there is a recursive sequence $g$ of natural numbers such that each natural number appears in $g$ at most finitely many times and for any $n \in \omega$, $n \in A$ iff the number of occurrences of $n$ in $g$ is odd.

$f$ is going to be $g$ modified in such a way that we put some fixed points between elements of $g$, pushing these elements to the right, to ensure that $f$ is a quasi-block function. We will be able to easily distinguish (within $f$) old elements of $g$ from the new filler elements, because only the new elements are going to be fixed points of $f$.

We construct $f$ by finite extension, starting from the empty function. Initially, all elements of sequence $g$ are unused.
At any given stage, suppose that $g(m)$ is the least unused element of sequence $g$ and that $n$ is the least argument such that $f(n)$ is not defined yet. If $g(m)>n$, then for each $i=n, \ldots, g(m)$ assign $f(i)=i$. Regardless of whether you performed the previous instruction, assign value $g(m)$ to the least $i$ such that $f(i)$ has no value set yet. If the least such $i$ is equal to $g(m)$, then put an additional fixed point before it to ensure that all fixed points serve as fillers in $g$. We declare that $g(m)$ is used and go to the next stage.

This is a quasi-block function because each argument $n$ is either a fixed point or is a number from sequence $g$ which has been pushed so far to the right that $f(n)<n$. Hence every finite initial segment of $\omega$ is closed under $f$. However, this is not a block function. If it were, then every $m$ such that $f(m)=n$ would need to be in the same block as $n$. Then we would be able to count how many times $n$ is assumed as the value of $f$ and hence $A$ would be decidable. 

The lower bound of $f$ diverges to $\infty$ because every value can be assumed only finitely often. However, no such bound is computable because otherwise we would be able to determine the last occurrence of every number in $g$ and $A$ would be computable. Observe we can assume that this bound is non-decreasing. We just need to set $f(n)=$ the largest $m$ such that $f(i) \geq m$ whenever $i \geq n$.

If $A$ is a c.e. set, then we utilise the RS module to show that the degree spectrum of $f$ consists of exactly the c.e. degrees. We can assume without loss of generality that $g$ assumes each of its values only once, then so does $f$ if we ignore fixed points.

 We take $\mathcal{I}_0$ such that behind it there are no quasi-blocks closed under $f^{-1}$. The condition $\mathcal{R}$ states that there is an element $n>\mathcal{I}_t$ such that $f(n) \in \mathcal{I}_t$. Observe that such element is determined uniquely. We want to retrieve $\mathcal{I}_{t+1} \supseteq \mathcal{I}_t$ satisfying $\mathcal{R}$. We need to look for $n$ described above and then to fill in all the missing numbers between $\mathcal{I}_t$ and $n$. Since the segment thus obtained is not a block, it needs to satisfy $\mathcal{R}$. We call this segment $\mathcal{I}_{t+1}$.
\end{proof}

\subsection{Unusual degree spectrum}\label{sec:unusual}
In this section we answer Wright's question (Question 6.2 in \cite{wright_degrees_2018}). The result we prove here is also relevant for Harrison-Trainor's question (p. 5 in \cite{harrison-trainor_degree_2018}). Recall a representation of a block function $f$ as infinite sequence $\alpha_f$ of (the indices of) types (see, Remark \ref{representation}).
\begin{definition}
Let $f$ be a computable block function with infinitely many types. The counting function for $f$ is defined by $c_f(n) = \#\{i : \alpha_f(i) = n\}$.
\end{definition}

\begin{proposition}\label{thm:nikolay}
Let $f$ be a computable block function with infinitely many pairwise non-embeddable types, each occurring finitely often. Then $\deg(c_f)$ is c.e. and $f_\mathcal{A} \geq_T c_f$ implies that $ deg(f_\mathcal{A})$ is c.e.
\end{proposition}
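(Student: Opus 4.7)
The plan rests on a single combinatorial lemma: a finite interval $[a,b] \subseteq \omega$ is an $f$-block of type $I_n$ iff it is closed under $f$ and abstractly isomorphic to $I_n$. For the nontrivial direction, if $[a,b]$ were closed under $f$ with $[a,b] \cong I_n$ but not an $f$-block, then an actual block $[a',b']$ would straddle a boundary of $[a,b]$: either $[a,b] \subsetneq [a',b']$, giving an embedding of $I_n$ into the type of $[a',b']$ -- ruled out by pairwise non-embeddability together with cardinality -- or the two overlap non-nestedly, in which case $[a,b] \cap [a',b']$ is a proper sub-interval of $[a,b]$ closed under $f|_{[a,b]}$ and its inverse, contradicting that $I_n$ is a single block abstractly. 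Granted the lemma, the types form a 1-1 computably enumerable family of finite structures, and $\alpha_f$ is computable by dovetailing: at $j$ equal to the right endpoint of the $i$-th block of $(\omega,<,f)$, $[0,j]$ decomposes uniquely as a sum of enumerated types from which $\alpha_f(0), \ldots, \alpha_f(i)$ are read off.

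Part 1 then follows by the standard non-decreasing-approximation trick. The function $c_f^s(n) := |\{i \leq s : \alpha_f(i) = n\}|$ is a computable non-decreasing approximation stabilising at $c_f(n)$, so the c.e.\ set $A := \{\langle n,k\rangle : \exists s\, c_f^s(n) > k\}$ satisfies $A \equiv_T c_f$: with $A$ as an oracle we decide it outright and recover $c_f(n)$ as the least $k$ with $\langle n,k\rangle \notin A$.

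For Part 2, introduce the position function $e(n) := |\{m : m <_{\mathcal{A}} n\}|$. Since $<_{\mathcal{A}}$ is computable, $W_e := \{\langle n,k\rangle : e(n) \geq k\}$ is c.e.\ with $W_e \equiv_T e$ by the same argument, so it suffices to prove $e \equiv_T f_{\mathcal{A}}$ under the hypothesis $f_{\mathcal{A}} \geq_T c_f$. For $f_{\mathcal{A}} \leq_T e$: computable $\alpha_f$ and the block sizes $s_i = |I_{\alpha_f(i)}|$ turn $e(n)$ into $n$'s block index $j(n)$ and within-block position, and the rigid type together with $e^{-1}$ (itself $\leq_T e$) recovers $f_{\mathcal{A}}(n)$. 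Conversely, $f_{\mathcal{A}}$ computes $n$'s $f_{\mathcal{A}}$-block via the lemma applied inside $\mathcal{A}$, yielding its type $I_t$; I then enumerate all $I_t$-blocks of $\mathcal{A}$ by running $f_{\mathcal{A}}$ on each $m \in \omega$, halting once $c_f(t)$ distinct blocks have been found (the sole use of the hypothesis), sort them by $<_{\mathcal{A}}$ to locate the rank $k$ of $n$'s block, read off $j(n)$ from $\alpha_f$ as the position of the $k$-th occurrence of $t$, and assemble $e(n) = \sum_{i < j(n)} s_i$ plus the within-block position.

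The principal obstacle is the lemma: without it, verifying closure under $f^{-1}$ is an a priori unbounded universal check, so neither $\alpha_f$ nor the $f_{\mathcal{A}}$-block of $n$ could be computed effectively. Pairwise non-embeddability enters precisely in ruling out the nested sub-case, and is the sole place the full strength of the proposition's hypothesis on $f$ is invoked in the argument.
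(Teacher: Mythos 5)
Your overall route is the same as the paper's: for the first claim you show $c_f$ is Turing-equivalent to a c.e.\ set obtained from its lower-semicomputable approximation (the paper packages this as $C_f^{\leq}\oplus C_f^{\geq}\equiv_T c_f$), and for the second you show that $f_\mathcal{A}$ together with $c_f$ recovers the position/successor structure of $\mathcal{A}$, whose degree is c.e.\ (the paper compresses this into ``$f_{\mathcal A}\geq_T c_f$ implies $Succ_{\mathcal A}\leq_T f_{\mathcal A}$'' and then cites Proposition~\ref{theorem:DgSp_Succ}); your block-recognition lemma is exactly the detail the paper leaves implicit, and your direct argument that the position function has c.e.\ degree is a harmless substitute for invoking the successor spectrum. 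Two points, however, need repair. First, when you ``apply the lemma inside $\mathcal{A}$'' you cannot effectively certify that a candidate finite set is a $<_\mathcal{A}$-\emph{interval}: that requires knowing that no further element of $\omega$ lies $<_\mathcal{A}$-between its members, a $\Pi^0_1$ condition, so the search as described never confirms its witnesses. The fix is to strengthen the lemma: any finite set $S$ closed under $f_\mathcal{A}$ with $(S,<_\mathcal{A},f_\mathcal{A}\upharpoonright S)$ isomorphic to an occurring type is automatically a true block. This is provable in the spirit of your own argument --- non-embeddability (plus cardinality) forces distinct blocks to be disjoint rather than nested, so $S$ meets finitely many blocks in consecutive pieces each closed under the restricted function and its inverse, and the ``a block type cannot decompose'' principle plus non-embeddability forces $S$ to equal a single block. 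With that version the searches for $n$'s block and for all $I_t$-blocks of $\mathcal{A}$ terminate correctly.

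Second, your claim that ``granted the lemma, the types form a 1-1 computably enumerable family'' is circular: the lemma certifies blocks only by comparison with already-known types, so it cannot by itself bootstrap the enumeration of types. What you actually need here is the paper's Remark~\ref{representation}, which asserts the 1-1 computable enumeration of types (and hence makes $\alpha_f$ and the block sizes available exactly as you use them); the paper's own proof relies on the same remark, since the definition of $c_f$ and the c.e.-ness of $C_f^{\leq}$ presuppose it. Once you cite that remark instead of deriving it from the lemma, and use the strengthened lemma inside $\mathcal{A}$, your argument goes through and coincides in substance with the paper's.
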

\begin{proof}
$C_f^{\leq} := \{(k,n): k \leq c_f(n)\}$ is c.e., $C_f^{\geq} := \{(k,n): k \geq c_f(n)\}$ is co-c.e., so $\deg(C_f^{\leq} \oplus C_f^{\geq})$ is c.e. Since $C_f^{\leq} \oplus C_f^{\geq} \equiv_T c_f$, $c_f$ is of c.e. degree.

Assume that $f_\mathcal{A} \geq_T c_f$. Hence, $Succ_\mathcal{A} \leq_T f_\mathcal{A}$. But $Succ_\mathcal{A} \geq_T f_\mathcal{A}$ always (for a computable $f$). Hence $f_\mathcal{A} \equiv_T Succ_\mathcal{A}$, and thus, by Proposition \ref{theorem:DgSp_Succ}, $f_\mathcal{A}$ is of c.e. degree. 
\end{proof}

\begin{theorem}\label{theorem:unusual_spectrum}
There exists a total computable function whose degree spectrum strictly contains all c.e. degrees and is strictly contained in the $\Delta_2$ degrees.
\end{theorem}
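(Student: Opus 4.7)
The plan is to construct a single computable block function $f$ satisfying the hypotheses of Proposition~\ref{thm:nikolay}, whose counting function has a non-computable Turing-incomplete c.e. degree, and then to exhibit one computable copy of $(\omega,<)$ in which the image of $f$ has non-c.e. $\Delta_2$ Turing degree. I will fix a Friedberg--Muchnik-style c.e. set $A$ with $\mathbf{0} < \deg(A) < \mathbf{0}'$, a uniformly computable family $\{I_n : n\in\omega\}$ of pairwise non-embeddable finite $f$-types, and a disjoint uniformly computable family $\{P_{e,k}^0, P_{e,k}^1 : e,k\in\omega\}$ of pairwise non-embeddable ``coding pair'' types. Then I build $\alpha_f$ stage-by-stage: at stage $s$ I append $I_s$, a few freshly-indexed coding pairs (dovetailing so each pair eventually appears), and an additional copy of $I_n$ for every $n$ entering $A$ at stage $s$. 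The resulting $f$ is a computable block function with infinitely many pairwise non-embeddable types each appearing finitely often; $c_f(I_n) = 1 + \mathbf{1}_A(n)$ and $c_f(P_{e,k}^i) = 1$, so $c_f \equiv_T A$. Moreover $f$ is neither almost constant nor almost identity, hence not intrinsically computable by Proposition~\ref{theorem:trivial}.

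Wright's Theorem~\ref{thm:wright} immediately gives $DgSp(f) \supseteq \{\text{c.e. degrees}\}$. For the strict inclusion $DgSp(f) \subsetneq \Delta_2$, I apply Proposition~\ref{thm:nikolay}: by standard results on the structure of $\Delta_2$ degrees above non-computable c.e. degrees, there is a $\Delta_2$ set $D \geq_T A$ whose Turing degree is not c.e.\ (e.g.\ a properly d.c.e.\ set of suitable degree). If $\deg(D)$ were in $DgSp(f)$, some copy $\mathcal{A}$ would satisfy $f_\mathcal{A} \equiv_T D \geq_T c_f$, forcing $\deg(f_\mathcal{A})$ to be c.e.\ by Proposition~\ref{thm:nikolay}, contradicting $\deg(D)$ non-c.e.; hence $\deg(D) \notin DgSp(f)$.

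For the strict inclusion $\{\text{c.e. degrees}\} \subsetneq DgSp(f)$ I exhibit a computable copy $\mathcal{A}^*$ of $(\omega,<)$ with $f_{\mathcal{A}^*} \equiv_T X$ for some $\Delta_2$ set $X$ of non-c.e.\ degree satisfying $X \geq_T A$. The construction is a stage-by-stage PtR-style argument in the spirit of Theorems~\ref{theo:finite-range} and~\ref{theorem:block-functions}, driven by a computable approximation $X_s$. For each requirement $\mathcal{R}_e$, I reserve its coding pair-family $\{(P_{e,k}^0,P_{e,k}^1)\}_{k\in\omega}$; every time $X_s(e)$ changes (including the initial placement), the PtR-module installs the next unused pair adjacently in $\mathcal{A}^*$ in the order dictated by the current $X_s(e)$, while deactivating the previously active pair by inserting a filler block between its two members. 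In the limit exactly one pair per $\mathcal{R}_e$ remains $\mathcal{A}^*$-adjacent, and its order codes $X(e)$; non-embeddability of types gives $X \leq_T f_{\mathcal{A}^*}$ via a direct decoder, and the standard stabilisation argument of Theorem~\ref{theo:finite-range} combined with $X \geq_T A$ gives $f_{\mathcal{A}^*} \leq_T X$.

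The main obstacle is this last construction: the Case~(a) PtR-module of Theorem~\ref{theorem:block-functions} crucially uses a type-pattern appearing infinitely often in $\alpha_f$, which is incompatible with the finite-occurrence hypothesis needed for Proposition~\ref{thm:nikolay}. My workaround consumes one fresh coding pair per swap action, trading a single repeatedly-used pattern for infinitely many one-shot patterns with distinct types. The delicate points in the verification are (i) showing that the deactivation step leaves exactly one adjacent coding pair per $\mathcal{R}_e$ in the limit so the decoder is well-defined, (ii) interleaving the $A$-coding $I_n$-blocks and the coding pairs so that $c_f$'s identification with $A$ is preserved, and (iii) scheduling which blocks and pairs are added at which stage so that $\mathcal{A}^*$ remains order-isomorphic to $(\omega,<)$ throughout.
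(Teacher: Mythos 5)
Your first two steps are fine and essentially coincide with the paper's: Wright's Theorem \ref{thm:wright} gives containment of the c.e.\ degrees, and the combination of Theorem \ref{thm:cooper_lempp_watson} with Proposition \ref{thm:nikolay} correctly excludes a properly d.c.e.\ degree in the interval above $\deg(c_f)$, so the spectrum is not all of $\Delta_2$. The genuine gap is the remaining half, producing a copy of non-c.e.\ degree, which is where the entire difficulty of the theorem lies, and your plan for it cannot work. First, it is internally inconsistent: you arrange $f$ so that Proposition \ref{thm:nikolay} applies and $c_f \equiv_T A$, and then you ask for a copy $\mathcal{A}^*$ with $f_{\mathcal{A}^*} \equiv_T X$, $X \geq_T A \equiv_T c_f$ and $\deg(X)$ non-c.e.; but then $f_{\mathcal{A}^*} \geq_T c_f$, so by that very proposition $\deg(f_{\mathcal{A}^*})$ is c.e.\ --- no such copy exists for your $f$. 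Second, the coding mechanism you describe is not available once $f$ is fixed: the sequence of blocks $\alpha_f$, in particular which coding-pair blocks are adjacent and in which order they occur, is an isomorphism invariant of $(\omega,<,f)$. Constructing a computable copy only lets you choose which natural numbers name which positions and when they appear; you cannot ``deactivate the previously active pair by inserting a filler block between its two members'', since the limit structure must realize exactly $\alpha_f$, and your decoder would therefore only read off computable information.

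Even if you repair the coding to the legitimate one (a designated element $x_e$ lands in a block of flavour $P^{X(e)}_{e,k}$, as in Case (a) of Theorem \ref{theorem:block-functions}), the one-shot fresh-pair workaround destroys the reduction $f_{\mathcal{A}^*} \leq_T X$: each change of $X_s(e)$ moves $x_e$ into a pair with a \emph{new} index $k$, so the final block of $x_e$ records the number of flips of the approximation (plus injuries from neighbouring requirements), information which $X$ need not compute. The stabilisation argument of Theorem \ref{theo:finite-range} relies precisely on the final configuration depending only on the final value $X(e)$, not on the history; without it you lose all control of $\deg(f_{\mathcal{A}^*})$ (it could be complete, hence c.e.), and it is not even clear that your fixed $f$ has any non-c.e.\ degree in its spectrum. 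The paper resolves exactly this tension by not fixing $f$, $A$, or a target set $X$ in advance: it builds $f$, the witness copy $\mathcal{A}$, and an incomplete c.e.\ set $I$ (paired with $J$, Friedberg--Muchnik style) in a single finite-injury construction, satisfying the non-c.e.-degree requirements $\Phi_{e_1}^{\Gamma_{f_\mathcal{A}}} \neq W_n \vee \Phi_{e_2}^{W_n} \neq \Gamma_{f_\mathcal{A}}$ by direct diagonalization, and paying for each block duplication these actions cause by enumerating a ticket into $I$, which keeps $c_f \leq_T I <_T \mathbf{0}'$ so that the Cooper--Lempp--Watson argument still applies. To salvage your two-phase plan you would have to fold the non-c.e.-degree diagonalization into the construction of $f$ itself in this manner.
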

We construct a computable block function $f$ with infinitely many types and each $c_f(n)$ finite. We want $c_f <_T \mathbf{0}'$ and a computable copy $\mathcal A$ of $(\omega,<)$ with $f_\mathcal{A}$ of non-c.e. degree. % and a computable copy $\mathcal A = (\omega, <_A)$ such that $f_\mathcal{A}$ is of non-c.e. degree and $c_f \leq_T I$.
Combining this with Proposition\ref{thm:nikolay} and a result by Cooper, Lempp and Watson from \cite{cooper_weak_1989} (see Theorem \ref{thm:cooper_lempp_watson}) finishes the proof.

For each $e,e_1,e_2,n \in \omega$, we have the following requirements:
\begin{align*}
    & & \mathcal I_e: I  \not\simeq \Phi_e^J, & & \mathcal J_e: J \not\simeq \Phi_e^I, & &\text{and}&& R_{\langle e_1,e_2,n \rangle}: \Phi_{e_1}^{\Gamma_{f_\mathcal{A}}} \not\simeq W_{n} \vee \Phi_{e_2}^{W_n} \not\simeq \Gamma_{f_\mathcal{A}},
\end{align*}
where $\Gamma_{f_{\mathcal A}}$ is the graph of $f_{\mathcal{A}}$. The non-c.e. degree requirements are based on \cite[p. 195]{epstein_degrees_1979}.

At stage $s$ we have finite sets $I_s,J_s$, structure $\mathcal{A}_s = (A_s, <_{\mathcal{A}_s})$ and a function $f_{\mathcal{A}_s}: A_s \rightarrow A_s$. Eventually, we set $\mathcal{A} = \bigcup_{s\in \omega}\mathcal{A}_s$. We assume some recursive $\omega$-type ordering of $\mathcal I_e$, $\mathcal J_e$, $\mathcal R_{\langle e_1, e_2, n \rangle}$, for all $e, e_1, e_2, n \in \omega$. During construction, requirements reserve numbers and, in order to be satisfied, they wait until those numbers meet certain conditions, in which case we say that they need attention. %Requirements $\mathcal I_e$, $\mathcal J_e$ reserve only one number at a time, while $R_i$ reserve four (their role is explained later). 
\begin{itemize}
    \item $\mathcal I_e$ (or $\mathcal J_e$) needs attention at stage $s+1$, if some $x$ reserved for it at stage $s$ and $I_s(x) = \Phi_{e,s}^{J_s}$ (or $J_s(x) = \Phi_{e,s}^{I_s}$).
\item $R_{\langle e_1,e_2,n \rangle}$ needs attention at stage $s+1$ if, at stage $s$, some $\langle u,v \rangle$ is reserved for it, along with certain $t_0, t_1, t_2$ (called tickets), and, for some $z$, $\langle u,v \rangle < z < s$:
\begin{align*}
    & &(\alpha) \,\,\,\Phi_{e_1, s}^{\Gamma_{f_{\mathcal A_s}}}[z] = W_{n,s}[z]
    && \text{and}&& (\beta) \,\,\,\Phi_{e_2, s}^{W_{n,s}[z]}(\langle u, v \rangle) = \Gamma_{f_{{\mathcal{A}}_s}}(\langle u, v \rangle).
\end{align*}
 
\end{itemize}

We use a variant of PtR (the proof of Theorem~\ref{theo:finite-range}). In each application of PtR we distinguish $E$---the set of fresh numbers---for which we formulate an additional $E$-condition.  
\subsubsection{Construction}
Let $(\mathcal C_i)_{i \in \omega}$ be a computable sequence of cycles, where $\mathcal C_i$ is of length $2^i$ (Figure \ref{fig:cycle}).
\begin{figure}
    \begin{tikzpicture}[scale=0.6]
%\tikzset{vertex/.style = %{shape=dot,draw,minimum size=1.5em}}
% vertices
\node (a) at  (0,0) {0};
\node (c) at  (9.5,0) {$2^{i}-1$};
\node (a1) at (1.5,0) {1};
\node (a2) at (3,0) {2};
%edge
\draw[->] (a)  to (a1);
\draw[->] (a1) to (a2);
%\path (aaa) to node {\ldots} (a3);
\node (a3) at (4.5,0) {};
\node (aaa) at (5.5,0) {};
\draw[->] (a2) to (a3);
\node (aaaa) at (5,0) {...};
\node (c1) at (7.3,0) {$2^{i}-2$};
\draw[->] (aaa) to (c1);
\draw[->] (c1) to (c);
\draw[->] (c) to [out=10,in=170] (a);
\end{tikzpicture}
    \caption{$\mathcal C_i = ([0;2^i-1], <, f_i)$, where the order $<$ is standard and $f_i$ corresponds to the arrows.}
    \label{fig:cycle}
\end{figure}
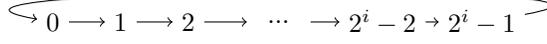
Put $I_0 = J_0 = \mathcal A_0 = f_{\mathcal{A}_0} = \emptyset$. Requirements have no reserved numbers, no numbers are frozen. Below we describe stage $s+1$, for $s \in \omega$.

\begin{enumerate}
    \item If no requirement needs attention at stage $s+1$, we choose the highest priority requirement with no reservation. If this is some $\mathcal I_e$ (or $\mathcal J_e$), we reserve for it the least fresh number $x$. If the highest priority requirement with no reservation is some $\mathcal R_{\langle e_1, e_2, n \rangle}$, we reserve for it the least number $\langle u,v\rangle $, fresh for $\mathcal A_s$ (i.e. $u,v$ do not occur in $\mathcal A_s$), and three \emph{consecutive} fresh numbers $t_0, t_1, t_2$, called tickets. We apply PtR by setting $B = \mathcal A_s$, $C=D =\emptyset$ and $E \supseteq \{u,v\}$ such that $|E| = 2^{t_0} + 2^{t_1}$ with every $x \in E$ being fresh for $\mathcal A_s$. We build a structure $\mathcal E =(E,<_\mathcal{E},g)$ where $<_{\mathcal{E}}$ is a linear order satisfying the $E$-condition, depicted in Figure \ref{fig:reservation}, which is:
\begin{itemize}
    \item $\mathcal C_{t_0} + \mathcal C_{t_1} \cong \mathcal E$, 
    \item $u$ is the $<_\mathcal{E}$-last element in the block corresponding to $\mathcal C_{t_0}$, and
    \item  $v$ the $<_\mathcal{E}$-first element in the block corresponding to $\mathcal C_{t_1}$.
\end{itemize} 
We set $\mathcal A_{s+1} = \mathcal A_{s} + \mathcal E$. We have $\langle u,v \rangle \notin \Gamma_{f_{\mathcal{A}_{s+1}}}$. We enumerate ticket $t_0$ into $I$. %We cancel all freezings and reservations for requirements of lower priority.

\begin{figure}
    \centering
    \begin{tikzpicture}
\draw[densely dashed] (0,0) -- (2,0) node[style={midway},above] {$\mathcal{A}_s$};
\node[line width=0.5mm,gray, draw, rounded rectangle, label=$\mathcal C_{t_0}$] at (2.65,0) {\hspace{0.7cm}$^u\cdot$};
\node[line width=0.5mm, gray, draw, rounded rectangle, label=$\mathcal C_{t_1}$] at (4.28,0) {$\cdot^v\hspace{1.4cm}$};
\end{tikzpicture}
    \caption{$\mathcal A_{s+1}$ after reserving $\langle u,v\rangle$ and tickets $t_0,t_1,t_2$ for $\mathcal{R}_{\langle e_1, e_2, n \rangle}$. }
    \label{fig:reservation}
\end{figure}
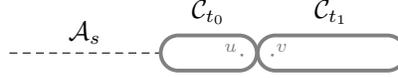
\item If a requirement needs attention, pick the highest one. We say it receives attention. If this is $\mathcal I_e$, some $x$ is reserved for $\mathcal I_e$ at stage $s$ and $I_s(x) = \Phi_{e,s}^{J_s}(x)$. Put $x$ into $I$, freeze the computation $\Phi^{J_s}_{e,s}(x)$ and cancel all freezings and reservations for lower priority requirements. Deal with with $\mathcal J_e$ accordingly.

Suppose the highest priority requirement needing attention is some $\mathcal R_{\langle e_1, e_2, n \rangle}$. Some $\langle u, v \rangle$ is reserved for $\mathcal R_{\langle e_1, e_2, n \rangle}$ at stage $s$ with some tickets $t_0,t_1,t_2$. Below we describe reactions to first and second attention received by $\mathcal R_{\langle e_1, e_2, n \rangle}$ with reservation $\langle u,v \rangle, t_0,t_1,t_2$. 
\begin{enumerate}
    \item[(i)] Suppose the reservation for $\mathcal R_{\langle e_1, e_2, n \rangle}$ has been made at stage $r$. After $r$ and before $s+1$ the structure $\mathcal A$ might have been extended by some $\mathcal T$ (thick line in Figure \ref{fig:1stattentionBefore}).  
\begin{figure}
    \centering
    \begin{tikzpicture}
%\tikzset{vertex/.style = %{shape=dot,draw,minimum size=1.5em}}
% vertices
%\draw[dotted] (2,1) -- (2,-1) {};
%node[style={midway},above] {};
%node[style={midway},above] {asdf}
\draw[densely dashed] (0,0) -- (2,0) node[style={midway},above] {$\mathcal{A}_{r-1}$};
%\draw (2,0) rectangle (40pt,-20pt)
\node [line width=0.5mm, draw, rounded rectangle, label=$\mathcal C_{t_0}$] at (2.65,0) {\hspace{0.7cm}$^u\cdot$};

\node [line width=0.5mm, draw, rounded rectangle, label=$\mathcal C_{t_1}$] at (4.3,0) {$\cdot^v\hspace{1.4cm}$};
\draw[line width=0.5mm] (5.3,0) -- (7.2,0) node[style={near end},above] {\footnotesize{\hspace{1cm} $\mathcal T$ (added at stages $> r$)}};
\end{tikzpicture}
    \caption{$\mathcal A_{s}$ when $R_{\langle e_1, e_2, n \rangle}$ receives attention for the first time with $\langle u,v \rangle$ and tickets $t_0,t_1,t_2$, assuming that the reservation has been made at stage $r$.}
    \label{fig:1stattentionBefore}
\end{figure}
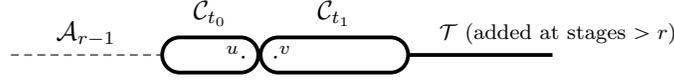
The idea is that we push to the right all numbers that occupy the highlighted positions in Figure \ref{fig:1stattentionBefore} and obtain the structure as in Figure \ref{fig:1stattentionAfter}. 

More formally, divide $\mathcal{A}_{s}$ into $\mathcal{A}_s = \mathcal B + \mathcal C + \mathcal D$, where $\mathcal D \cong \mathcal A_{r-1}$,  $\mathcal C \cong \mathcal C_{t_0} + \mathcal C_{t_1}$ and $\mathcal D \cong \mathcal T$, and apply PtR. Take $|C \cup D|$ numbers, fresh for $\mathcal A_s$, and make $F$ out of them. Build a structure $\mathcal F = (F, <_\mathcal{F}; g)$, where $<_\mathcal{F}$ is a linear order, satisfying the $F$-condition $\mathcal F \cong \mathcal C + \mathcal D$. We rebuild $\mathcal C$ to get $\mathcal C' = (C, <_\mathcal{C}; h)$ where $\mathcal C'$ satisfies the $C$-condition $\mathcal C' \cong \mathcal C_{t_1} + \mathcal C_{t_0}$. We set $\mathcal A_{s+1} = \mathcal B + \mathcal F + \mathcal C' + \mathcal D$ (Figure \ref{fig:1stattentionAfter}).
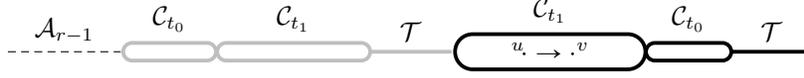
\begin{figure}
    \centering
    \begin{tikzpicture}
\draw[densely dashed] (0,0) -- (1.5,0) node[style={midway},above] {$\mathcal{A}_{r-1}$};
%\draw (2,0) rectangle (40pt,-20pt)
\node [line width=0.5mm, draw, rounded rectangle, label=$\mathcal C_{t_0}$, lightgray] at (2.15,0) {$\hspace{1cm}$};
\node [line width=0.5mm, draw, rounded rectangle, label=$\mathcal C_{t_1}$, lightgray] at (3.8,0) {$\hspace{1.8cm}$};
\draw[line width=0.5mm, lightgray] (4.8,0) -- (5.9,0) node[style={midway},above,black] {$\mathcal{T}$};
\node [line width=0.5mm, draw, rounded rectangle, label=$\mathcal C_{t_1}$] at (7.21,0) {$\hspace{0.6cm}{}^u\! \cdot\to\cdot^v\hspace{0.6cm}$};
\node [line width=0.5mm, draw, rounded rectangle, label=$\mathcal C_{t_0}$] at (9.05,0) {$\hspace{0.9cm}$};
\draw[line width=0.5mm] (9.6,0) -- (10.7,0) node[style={midway},above,black]{$\mathcal T$};
\end{tikzpicture}
    \caption{The result of reaction to \emph{first attention} for $\mathcal R_{\langle e_1,e_2,n \rangle}$ with reservation $\langle u,v\rangle$ and tickets $t_0,t_1,t_2$. Gray part is occupied by fresh numbers, thick part represents pushed numbers.}
    \label{fig:1stattentionAfter}
\end{figure}

Observe that pushed numbers from $\mathcal C + \mathcal D$ assume in $\mathcal A_{s+1}$ the same order structure as in $\mathcal A_{s}$ but the behavior of $f_{\mathcal{A}_{s+1}}$ mimics that on $\mathcal C_{t_1} + \mathcal C_{t_0}$. This makes $\Gamma_{f_{\mathcal A_{s+1}}}(\langle u, v \rangle) = 1$ and thus $\mathcal R_{\langle e_1,e_2,n \rangle}$ is satisfied at stage $s+1$. We enumerate $t_1$ into $I$ and invalidate all reservations and freezings for lower priority requirements.
\item[(ii)]  Suppose $\mathcal R_{\langle e_1,e_2,n \rangle}$ has made the reservation at stage $r$ and received the first attention at stage $p+1$. By the time we got to stage $s+1$, the structure $\mathcal A$ might have been extended by some $\mathcal U$  (Figure \ref{fig:2ndattentionBefore}). %Observe that $\mathcal A_s$ looks as in Figure \ref{fig:2ndattentionBefore}.
\begin{figure}
    \centering
    \begin{tikzpicture}
\draw[densely dashed] (0,0) -- (0.7,0) node[style={midway},above] {$\mathcal{A}_{r-1}$};
\node [draw, rounded rectangle, label=$\mathcal C_{t_0}$] at (1,0) {$\hspace{0.4cm}$};
\node [draw, rounded rectangle, label=$\mathcal C_{t_1}$] at (1.85,0) {$\hspace{0.8cm}$};
\draw[] (2.36,0) -- (3.1,0) node[style={midway},above] {$\mathcal T$};
\node [line width=0.5mm,draw, rounded rectangle, label=$\mathcal C_{t_1}$] at (3.8,0) {${}^u\! \cdot\to\cdot^v$};
\node [line width=0.5mm,draw, rounded rectangle, label=$\mathcal C_{t_0}$] at (4.8,0) {$\hspace{0.4cm}$};
\draw[line width=0.5mm] (5.1,0) -- (5.8,0) node[style={midway},above] {$\mathcal T$};
\draw[dotted] (5.85,-0.5) -- (5.85,0.5);
\draw[line width=0.5mm, densely dotted] (5.8,0) -- (6.5,0) node[style={near end},above] {\hspace{3.5cm} $\mathcal U$ added at stages $> p+1$};;
\end{tikzpicture}
    \caption{$\mathcal A_{s}$ when $R_{\langle e_1, e_2, n \rangle}$ receives attention for the second time with $\langle u,v \rangle$ and tickets $t_0,t_1,t_2$, assuming that the reservation has been made at stage $r$.}
    \label{fig:2ndattentionBefore}
\end{figure}
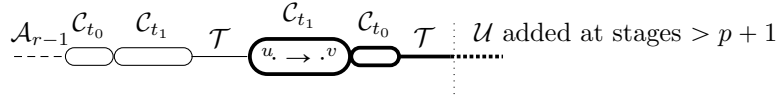
The idea is that we push all numbers occupying the highlighted positions in Figure \ref{fig:2ndattentionBefore} and obtain the structure as in Figure \ref{fig:2ndattentionAfter}.
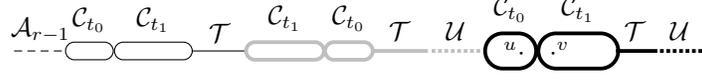
\begin{figure}
    \centering
    \begin{tikzpicture}
\draw[line width=0.5mm, lightgray] (4.75,0) -- (5.5,0)node[style={midway},above,black]{$\mathcal T$};
\draw[line width=0.5mm, densely dotted, lightgray] (5.55,0) -- (6.2,0) node[style={midway},above,black]{$\mathcal U$};
\draw[densely dashed] (0,0) -- (0.7,0) node[style={midway},above] {$\mathcal{A}_{r-1}$};
\node [draw, rounded rectangle, label=$\mathcal C_{t_0}$] at (1,0) {$\hspace{0.4cm}$};
\node [draw, rounded rectangle, label=$\mathcal C_{t_1}$] at (1.85,0) {$\hspace{0.8cm}$};
\draw[] (2.36,0) -- (3.1,0) node[style={midway},above] {$\mathcal T$};
\node [line width=0.5mm,draw, rounded rectangle, label=$\mathcal C_{t_1}$, lightgray] at (3.6,0) {$\hspace{0.8cm}$};
\node [line width=0.5mm,draw, rounded rectangle, label=$\mathcal C_{t_0}$, lightgray] at (4.45,0) {$\hspace{0.4cm}$};
\node [line width=0.5mm,draw, rounded rectangle, label=$\mathcal C_{t_0}$] at (6.6,0) {$\hspace{0.1cm}^u\cdot$};
\node [line width=0.5mm,draw, rounded rectangle, label=$\mathcal C_{t_1}$] at (7.5,0) {$\cdot^v\hspace{0.5cm}$};
\draw[line width=0.5mm] (8,0) -- (8.5,0) node[style={midway},above]{$\mathcal T$};
\draw[line width=0.5mm, densely dotted] (8.5,0) -- (9.2,0) node[style={midway},above]{$\mathcal U$};
\end{tikzpicture}
    \caption{The result of reaction to \emph{second attention} of $\mathcal R_{\langle e_1,e_2,n \rangle}$. Gray part is occupied by fresh numbers, thick part represents pushed numbers.}
    \label{fig:2ndattentionAfter}
\end{figure}

More formally, we divide $\mathcal A_s = \mathcal B + \mathcal C + \mathcal D$ in a way that $\mathcal B \cong \mathcal A_{r-1} + \mathcal C_{t_0} + \mathcal C_{t_1} + \mathcal T$,  $\mathcal C \cong \mathcal C_{t_1} + \mathcal C_{t_0}$ and $\mathcal D \cong \mathcal T + \mathcal U$ with $u,v$ residing in a copy of $\mathcal C_{t_1}$ within $C$. We apply PtR with $\mathcal B, \mathcal C, \mathcal D$ defined above. Let $F$ be the set of $|C \cup D|$ numbers, fresh for $\mathcal A_s$. We build a finite structure $\mathcal F = (F,<_\mathcal{F}, g)$, where $<_\mathcal{F}$ is a linear order, satisfying the $F$-condition $\mathcal F \cong \mathcal C + \mathcal D$. We rebuild $\mathcal C$ to get $\mathcal C' = (C, <_\mathcal{C}, h)$ satisfying the $C$-condition $\mathcal C' \cong C_{t_0} + C_{t_1}$.
We set $\mathcal{A}_{s+1} = \mathcal B + \mathcal F + \mathcal C' + \mathcal D.$
We have $\Gamma_{f_{\mathcal{A}_{s+1}}}(\langle u,v \rangle) = 0$. $\mathcal R_{\langle e_1,e_2,n \rangle}$ is satisfied at stage $s+1$. We enumerate $t_2$ into $I$ and invalidate all reservations and freezings for lower priority requirements.
\end{enumerate}
\end{enumerate}

\subsubsection{Verification}
\begin{restatable}{lemma}{lemmaAcomp}\label{lemmaAcomp}
$\mathcal A$ is computable.
\end{restatable}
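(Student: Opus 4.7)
The plan is to verify two things: that the domain $\bigcup_s \mathcal A_s$ equals $\omega$, and that $<_\mathcal A$ is computable on $\omega\times\omega$. No element is ever removed from $\mathcal A_s$ during the construction, so the first claim reduces to showing that fresh numbers are introduced at infinitely many stages. A finite-injury analysis (sketched in the next paragraph) implies that each requirement receives attention only finitely often, so infinitely many stages must fall under Step~(1), each of which introduces a fresh block $\mathcal E$; with the standard convention that fresh numbers are chosen least-first, every $n\in\omega$ eventually enters $\mathcal A$.

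The heart of the argument is the usual finite-injury induction. Because Step~(1) always awards a new reservation to the highest-priority requirement currently without one, the set of requirements holding a reservation at any stage is an initial segment of the fixed priority order. I would prove by induction on priority that every requirement acts only finitely often: each $\mathcal I_e$ or $\mathcal J_e$ enumerates its witness at most once per reservation cycle; each $\mathcal R_{\langle e_1,e_2,n\rangle}$ responds to at most two attention requests per reservation cycle (first and second attention); and a requirement's reservation is cancelled only by a strictly higher-priority attention response. Once the higher-priority requirements have quiesced, the given requirement quiesces after finitely many further actions.

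For computability of $<_\mathcal A$, the key observation is that each PtR application in Step~(2) leaves the left block $\mathcal B$ untouched, and $\mathcal B$ always contains $\mathcal A_{r-1}$, where $r$ is the acting requirement's current reservation stage. Consequently a number $x$ added at stage $s_x$ is moved only by requirements whose current reservation stage satisfies $r\le s_x$; since re-reservations occur only at strictly later stages, such requirements are exactly the members of the finite, $x$-computable initial segment $P(x)$ consisting of requirements ever reserved by stage $s_x$. Applying the finite-injury analysis to $P(x)$ produces an effectively computable stage $s^\ast(x)$ beyond which no member of $P(x)$ acts with a reservation $\le s_x$, so the position of $x$ in $\mathcal A_t$ is permanent for $t\ge s^\ast(x)$. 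To decide $x<_\mathcal A y$, one computes $s^\ast = \max(s^\ast(x), s^\ast(y))$ and reads off the answer from $\mathcal A_{s^\ast}$.

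The step expected to be the main obstacle is making the stage $s^\ast(x)$ genuinely effective: one must process the members of $P(x)$ in order of decreasing priority, at each step simulating the construction until the current requirement's reservation is no longer threatened by higher-priority activity and its bounded quota of attention responses has been exhausted. The bookkeeping is standard for finite-injury constructions but is the place where the priority structure of the construction is actually used; once $s^\ast$ is produced, the remaining verification is immediate.
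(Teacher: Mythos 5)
There is a genuine gap, and it sits exactly where you flagged it: the stage $s^\ast(x)$ cannot be made effective. The finite-injury analysis tells you that each requirement acts only finitely often, but it gives no computable bound on when the higher-priority requirements have quiesced. Whether an $\mathcal I_e$, $\mathcal J_e$ or $\mathcal R_{\langle e_1,e_2,n\rangle}$ will ever (again) need attention depends on the convergence of computations such as $\Phi_{e,s}^{J_s}(x)$ and on membership in $W_n$; your proposed procedure of ``simulating the construction until \dots its bounded quota of attention responses has been exhausted'' does not terminate when the requirement simply never acts again, and there is no effective way to recognize that situation. Worse, if such an $s^\ast(x)$ were computable, then the final rank of every element of $\mathcal A$ would be computable (no element is ever inserted below $x$ after the stage at which $x$ is last pushed), hence the isomorphism from $(\omega,<)$ to $\mathcal A$ would be computable and $f_\mathcal{A}$ would be computable --- contradicting Lemma \ref{lemma:satisfied}, which guarantees $f_\mathcal{A}$ has non-c.e.\ (in particular nonzero) degree. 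So this route is not just incomplete; it cannot be repaired.

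The fix is to notice that no settling argument is needed at all. In this construction the PtR applications only \emph{insert} fresh numbers between, or to the right of, elements already present; they never permute two elements that are both already in $\mathcal A_s$. (An element's rank keeps growing, but its order relative to any fixed other element is frozen from the moment both appear.) Hence $\mathcal A_s \subseteq \mathcal A_{s+1}$ as ordered sets, and to decide $x <_\mathcal{A} y$ one simply runs the construction until the first stage $s_0$ with $x,y\in \mathcal A_{s_0}$ and answers according to $<_{\mathcal A_{s_0}}$; since fresh numbers are chosen least-first, every $n$ eventually enters the domain, so this search halts. This is the paper's argument, and it is why computability of $<_\mathcal{A}$ coexists with the noncomputability of $f_\mathcal{A}$: what is computable is the order among names, not their eventual positions. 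Your argument for the domain being $\omega$ is fine; the treatment of $<_\mathcal{A}$ is the part that needs to be replaced.
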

\begin{proof}
It is clear that the domain of $\mathcal A$ is $\omega$. By the construction, once two numbers enter $\mathcal A$, their order according to $<_\mathcal{A}$ is never changed. Hence, $\mathcal A_s \subset \mathcal A_{s+1}$, for every $s$. Thus we can set $\mathcal A = \bigcup_{s\in\omega}\mathcal A_s$. Clearly, $<_\mathcal{A}$ is computable: to decide whether $x <_\mathcal{A} y$ holds it suffices to carry out the construction until some stage $s_0$ at which $x,y \in \mathcal{A}_{s_0}$. We know that $x <_\mathcal{A} y \iff x <_{\mathcal{A}_{s_0}}y$. 
\end{proof}

\begin{lemma}\label{lemma:satisfied}
Every requirement is eventually satisfied. Hence, $I,J$ are intermediate and $f_\mathcal{A}$ is of non-c.e. degree.
\end{lemma}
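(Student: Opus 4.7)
The plan is to proceed by induction on the priority ordering of requirements. For a fixed requirement $\mathcal P$, I would inductively assume every higher-priority requirement acts only finitely often, and fix a stage $s_0$ beyond which no higher-priority requirement acts. After $s_0$, $\mathcal P$ eventually receives a permanent reservation that will never be invalidated. For $\mathcal I_e$ (and symmetrically $\mathcal J_e$) with reservation $x$, I would apply a standard Friedberg diagonalization: if $\Phi_e^J(x)$ never converges to $0$ in the limit, then $I(x) = 0 \neq \Phi_e^J(x)$ directly; otherwise, at the first stage $s > s_0$ witnessing $\Phi_{e,s}^{J_s}(x)\downarrow = 0$, we enumerate $x$ into $I$ and freeze the $J$-use, which remains protected because higher-priority requirements have stopped acting and lower-priority ones have their freezings invalidated.

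The main difficulty lies in the $\mathcal R_{\langle e_1,e_2,n\rangle}$ case. Let $\langle u,v\rangle$ with tickets $t_0, t_1, t_2$ be the permanent reservation; the construction arranges $\Gamma_{f_\mathcal A}(\langle u,v\rangle)$ to start at $0$, flip to $1$ upon first attention at some stage $s_1$, and flip back to $0$ upon second attention at some $s_2 > s_1$. I would prove that a third attention is impossible: were one to occur at some $s_3 > s_2$, the triggering conditions at $s_1, s_2, s_3$ would yield $\Phi_{e_1}^{\Gamma_{f_{\mathcal A_{s_i}}}}[z_i] = W_{n,s_i}[z_i]$ and $\Phi_{e_2}^{W_{n,s_i}[z_i]}(\langle u,v\rangle) = \Gamma_{f_{\mathcal A_{s_i}}}(\langle u,v\rangle)$ with the last values alternating $0, 1, 0$. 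Because the PtR module keeps $\Gamma_{f_\mathcal A}$ fixed on all pairs outside the currently-modified middle interval $C$, and $W_n$ is c.e.\ (hence monotone in stages), the only way for $\Phi_{e_2}^{W_n[z]}(\langle u,v\rangle)$ to oscillate is by its use extending to freshly enumerated elements of $W_n$; combined with $\Phi_{e_1}^{\Gamma_{f_\mathcal A}}[z_i] = W_n[z_i]$ holding throughout, a careful matching of uses forces a contradiction. Hence $\mathcal P$ receives attention at most twice and afterwards one of its diagonalization clauses holds permanently.

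Combining the three cases, every requirement is eventually satisfied. The $\mathcal I_e$ and $\mathcal J_e$ requirements yield $I \not\leq_T J$ and $J \not\leq_T I$, making $I$ and $J$ incomparable c.e.\ sets of intermediate degree; the $\mathcal R$ requirements rule out every pair of functionals witnessing $W_n \equiv_T \Gamma_{f_\mathcal A}$ for c.e.\ $W_n$, and since $\Gamma_{f_\mathcal A} \equiv_T f_\mathcal A$, this gives that $f_\mathcal A$ has non-c.e.\ Turing degree. The main technical obstacle is the use-tracking argument bounding $\mathcal R$-attentions to two; the $\mathcal I_e, \mathcal J_e$ cases follow the familiar Friedberg pattern.
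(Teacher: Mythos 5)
Your overall skeleton is the same as the paper's: finite injury with induction on the priority ordering, the standard Friedberg treatment of $\mathcal I_e$ and $\mathcal J_e$, and the reduction of everything to the claim that each $\mathcal R_{\langle e_1,e_2,n\rangle}$ receives attention at most twice with a fixed reservation $\langle u,v\rangle, t_0,t_1,t_2$. The gap is precisely at that last claim. Monotonicity of $W_n$ plus ``a careful matching of uses'' does not by itself yield a contradiction from a hypothetical third attention: since the oracle strings $W_{n,s}[z]$ only grow, $\Phi_{e_2}^{W_n[z]}(\langle u,v\rangle)$ can perfectly well output $0,1,0$ at three successive stages (each oscillation costs $W_n$ only one new element below $z$), and the agreement $\Phi_{e_1}^{\Gamma_{f_{\mathcal A}}}[z]=W_n[z]$ can keep re-aligning at later stages because $\Gamma_{f_{\mathcal A}}$ itself changes --- on the rebuilt middle interval $C$ and on pairs involving freshly added elements (so your assertion that PtR keeps $\Gamma_{f_{\mathcal A}}$ fixed on \emph{all} pairs outside $C$ is not accurate either). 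Nothing in the data you list pins down the $e_1$-side, so no contradiction follows.

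What actually rules out a third attention is a structural fact your sketch never invokes: the reaction to the \emph{second} attention restores the old part of the structure, $\mathcal A_{t+1}\restriction A_s=\mathcal A_s\restriction A_s$ (the interval containing $u,v$ is rebuilt as $\mathcal C_{t_0}+\mathcal C_{t_1}$ again), and, as long as the reservation survives, this restriction never changes afterwards. Consequently the stage-$s$ computation $\Phi_{e_1}^{\Gamma_{f_{\mathcal A_s}}}[z]=W_{n,s}[z]$ is revived permanently. On the other hand, comparing clause $(\beta)$ at the two attention stages, where $\Gamma_{f_{\mathcal A}}(\langle u,v\rangle)$ took different values, forces $W_{n,t}[z]\neq W_{n,s}[z]$, and this change is irreversible because $W_n$ is c.e. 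Hence there is a fixed $x<z$ with $\Phi_{e_1}^{\Gamma_{f_{\mathcal A_w}\restriction A_s}}(x)=W_{n,s}(x)\neq W_{n,w}(x)$ at every later stage $w$ with this reservation, so clause $(\alpha)$ fails forever: $\mathcal R_{\langle e_1,e_2,n\rangle}$ never needs attention again, and it is satisfied via this permanent disagreement on the $e_1$-side. This ``flip, force $W_n$ to change, then undo'' mechanism (Epstein-style) is the key step of the paper's proof of the lemma; without it the bound of two attentions, and hence the lemma, is not established.
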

\begin{proof}
This follows from finite-injury. It remains to observe that each requirement can receive attention only finitely many times with the same numbers reserved for it. This is clear for $\mathcal I_e, \mathcal J_e$ (see, e.g. \cite[Chap.~VII.2]{soare_recursively_1987}). %[p. \textcolor{red}{CHECK}]
We show that no $\mathcal R_{\langle e_1,e_2,n \rangle}$ needs attention more than twice with the same $\langle u,v\rangle$ and tickets $t_0,t_1,t_2$ reserved for it (cf. \cite[p. 196]{epstein_degrees_1979}). Suppose the reservation was made at stage $r$, the first attention was at stage $s+1$ and the second at stage $t+1$. Since $\langle u,v\rangle, t_0,t_1,t_2$ are reserved for $\mathcal R_{\langle e_1,e_2,n \rangle}$ at stage $t \geq s+1$, no requirement with lower priority than $\mathcal R_{\langle e_1,e_2,n \rangle}$ has received attention at any stage $u$, $t\geq u \geq s+1$.
Actions performed at stage $t+1$ lead to $\mathcal A_{t+1}\restriction A_{s} = \mathcal A_{s} \restriction A_{s}$. Therefore, $\Phi_{e_1}^{\Gamma_{f_{\mathcal{A}_{t+1}\restriction A_{s}}}}[z] = \Phi_{e_1}^{\Gamma_{f_{\mathcal{A}_{s}\restriction A_{s}}}}[z] =\Phi_{e_1}^{\Gamma_{f_{\mathcal{A}_{s}}}}[z] = W_{n,s}[z]$. At stage $s+1$ we had $\Phi_{e_2}^{W_{n,s}[z]} (\langle u, v \rangle)= \Gamma_{f_{\mathcal A_{s}}}(\langle u, v \rangle) \neq \Gamma_{f_{\mathcal A_{t}}}(\langle u, v \rangle)$. Since at stage $t+1$ we had $\Phi_{e_2}^{W_{n,t}[z]}(\langle u,v\rangle) = \Gamma_{f_{\mathcal A_{t}}}(\langle u, v \rangle)$ we must have $W_{n,t}[z] \neq W_{n,s}[z]$. Hence, for some $x$, $\Phi_{e_1}^{\Gamma_{f_{\mathcal{A}_{t+1}\restriction A_{s}}}}(x) = W_{n,s}(x) \neq W_{n,t}(x)$. Now, observe that  $\mathcal A_{t+1}\restriction A_{s}$ does not change at any later stage at which $\langle u,v\rangle$ is reserved for $\mathcal R_{\langle e_1,e_2,n\rangle}$. Hence, for all such stages $w \geq t+1$, $\Phi_{e_1,w}^{\Gamma_{f_{\mathcal{A}_{w}\restriction A_{s}}}}(w) \neq W_{n,w}(x)$ and $\mathcal R_{\langle e_1,e_2,n\rangle}$ does not need attention at stage $w+1$.
\end{proof}

\begin{lemma}\label{main_verification_lemma}
For every $n \in \omega$, $c_f(n)$ is never increased due to numbers $> n+2$ entering $I$.
\end{lemma}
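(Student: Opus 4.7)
The plan is to case-split on how a number $m > n+2$ enters $I$ at some stage $s+1$. Inspection of the construction shows that $m$ enters $I$ in exactly one of four ways: (a) as the witness $x$ of some $\mathcal{I}_e$ receiving attention (in which case no blocks are added to $\mathcal{A}$, hence $c_f(n)$ is unchanged); (b) as the ticket $t_0$ of a fresh reservation for some $\mathcal{R}_{\langle e_1, e_2, n'\rangle}$; (c) as $t_1$ at a first attention; or (d) as $t_2$ at a second attention. Case (b) is immediate: the only blocks added are $\mathcal{C}_{t_0}$ and $\mathcal{C}_{t_1}$, and $t_0 = m > n+2$ together with $t_1 = t_0 + 1$ forces $t_0, t_1 > n$.

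In cases (c) and (d), let $r$ be $\mathcal{R}$'s current reservation stage. Direct inspection of the PtR action shows that the block types appearing in $\mathcal{A}$ at $s+1$ lie in $\{t_0, t_1\} \cup T_{(r, s]}$, where $T_{(r, s]}$ is the set of types of blocks added to $\mathcal{A}$ at stages in $(r, s]$. Consecutivity of tickets and $m > n+2$ give $t_0 \geq m - 2 > n$, so $t_0, t_1 \neq n$; it remains to show $n \notin T_{(r, s]}$. I would establish the following stronger sub-lemma by strong induction on $s$: \emph{for every $\mathcal{R}'$ of the form $\mathcal{R}_{\langle e_1', e_2', n''\rangle}$ whose current reservation at stage $r'$ is still valid at $s+1$, every block added to $\mathcal{A}$ at a stage in $(r', s]$ has type $\geq t_0(\mathcal{R}')$.} For a block added at $u \in (r', s]$: if $u$ is a fresh reservation for some $\mathcal{R}''$, then the three ``least unused'' consecutive tickets at $u > r'$ strictly exceed $t_0(\mathcal{R}')$; if $u$ is an attention stage for some $R$-type $\mathcal{R}''$, then $\mathcal{R}''$ cannot be of strictly higher priority than $\mathcal{R}'$ (such attention would invalidate $\mathcal{R}'$'s reservation), so either $\mathcal{R}'' = \mathcal{R}'$ or $\mathcal{R}''$ is of strictly lower priority. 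In the lower priority case, the injury-priority argument forces $\mathcal{R}''$'s current reservation stage $r'' > r'$ (at $r''$ all higher priority requirements including $\mathcal{R}'$ had reservations; any invalidation of $\mathcal{R}'$ before $r'$ would simultaneously have invalidated $\mathcal{R}''$, so $\mathcal{R}''$ can re-reserve only at a stage $> r'$), hence $t_0(\mathcal{R}'') > t_0(\mathcal{R}')$, and by IH applied to $\mathcal{R}''$ the added types at $u$ are $\geq t_0(\mathcal{R}'') > t_0(\mathcal{R}')$. In the subcase $\mathcal{R}'' = \mathcal{R}'$, the added types lie in $\{t_0(\mathcal{R}'), t_1(\mathcal{R}')\} \cup T_{(r', u-1]}$, all $\geq t_0(\mathcal{R}')$ by IH. Attention stages for $\mathcal{I}_e$ or $\mathcal{J}_e$ add no blocks and therefore contribute nothing.

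The main obstacle is the priority-theoretic step in the inductive proof: cleanly arguing that every lower priority $R$-type requirement acting during $(r', s]$ has its current reservation strictly after $r'$. This rests on the downward propagation of invalidation (attention by a strictly higher priority requirement invalidates all strictly lower reservations) together with re-reservations happening in priority order. Once this is pinned down alongside the natural ``least unused'' convention for ticket assignment, the remainder of the argument is direct bookkeeping on the PtR module's effect on block-type multiplicities.
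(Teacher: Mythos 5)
Your proof is correct and takes essentially the same route as the paper's: the same case split on which ticket ($t_0$, $t_1$ or $t_2$) enters $I$, with the hard (attention) cases resolved by observing that every block type copied at such a stage is either $t_0,t_1$ or a ticket assigned fresh after the current reservation stage, which forces it to exceed the ticket entering $I$ by the upward-closed behaviour of reservations under cancellation. Your explicit inductive sub-lemma merely formalizes the freshness/priority bookkeeping that the paper states informally, so there is no substantive difference.
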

\begin{proof}
Suppose the contrary. Then there exists $n$ such that $c_f(n)$ is increased because of some $k > n+2$ entering $I$. Let $s+1$ be the stage at which this happens. Since $c_f(n)$ is increased at stage $s+1$, $\mathcal C_{n}$ is present in $\mathcal A_{s+1}$. Since $c_f(n)$ is increased due to $k$ entering $I$, $k$ must be associated at stage $s+1$ with some $R_i$. Hence, $k$ is one of the tickets $t_0, t_1,t_2$ paired with $R_i$ at this point. There are three cases.
\begin{itemize}
\item[($k=t_0$)] This is when $R_i$ is initialized and receives tickets $t_0,t_1,t_2$ (see Figure \ref{fig:reservation}). For $c_f(n)$ to increase, we must have $n=t_0$ or $n = t_1$. $n=t_0$ is not possible because then we would have $k=t_0=n$ which contradicts $k>n+2$. $n=t_1$ is also not possible because we would have $k=t_0 = t_1 - 1 = n-1$ which contradicts $k > n+2$.
\item[($k = t_1$)] This is when $R_i$ receives first attention with tickets $t_0,t_1,t_2$ (see Figure \ref{fig:1stattentionBefore}). $\mathcal C_n$ must occur somewhere at the highlighted positions in Figure \ref{fig:1stattentionBefore} because this fragment of the structure is copied leading to an increase of $c_f$. Hence, $n=t_0$ or $n=t_1$, or $\mathcal C_{n}$ occurs in $\mathcal T$. $n \neq t_0$ because otherwise $n=t_0$, $k=t_1=t_0+1=n+1$ which contradicts $k > n+2$. $n$ cannot be $t_1$ because otherwise $n=t_1=k$ which contradicts $k> n+2$. Hence, $\mathcal C_n$ occurs in $\mathcal T$. However, this is also not possible for the following reason. We know that $k=t_1$ enters $I$ so this is due to $R_i$ acting when receiving the fist attention with tickets $t_0,t_1,t_2$. This means that no higher than $R_i$ requirement $R_j$ (i.e., with $j<i$) has received attention after $R_i$ got associated with tickets $t_0,t_1,t_2$ (up to the current stage)---otherwise $R_i$'s tickets would have been reassigned to numbers different than $t_0,t_1,t_2$. This means that $\mathcal C_n$ entered the construction \emph{after} $R_i$ was assigned to $t_0,t_1,t_2$. Therefore, by the construction (i.e. the way we choose and assign tickets to requirements (re)entering the construction), $n$ is a ticket for some lower priority requirement $R_l$ ($l > i$). But when $n$ enters the construction as a ticket of such $R_l$, $n$ is chosen as a fresh number so, in particular, $n > t_1 = k$ which contradicts $k>n+2$.
\item[($k=t_2$)] This is when $\mathcal R_i$ receives attention for the second time with tickets $t_0,t_1,t_2$ (see Figure \ref{fig:2ndattentionBefore}). $\mathcal C_n$ occurs somewhere at the highlighted positions in Figure \ref{fig:2ndattentionBefore}, i.e. $n=t_0$ or $n=t_1$, or $\mathcal C_n$ occurs in $\mathcal T + \mathcal U$. $n \neq t_0$ because otherwise $n=t_0$, $k=t_2=t_0+2 = n+2$ which contradicts $k > n+2$. $n \neq t_1$ because otherwise $n=t_1$, $k=t_2=t_1+1=n+1$ which contradicts $k > n+2$. Therefore, $n$ occurs in $\mathcal T + \mathcal U$. The rest of the argument is similar to the analogical place of the case previous case ($k=t_1$).
\end{itemize}
\end{proof}

\begin{lemma}\label{counting_function_lemma}
$c_f \leq_T I$.
\end{lemma}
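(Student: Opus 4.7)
The plan is to compute $c_f(n)$ from oracle $I$ by directly exploiting Lemma~\ref{main_verification_lemma}. The key point is that, according to that lemma, any stage which increases $c_f(n)$ must be a stage where some $k \leq n+2$ is enumerated into $I$. Since $[0;n+2]$ is finite, once $I \cap [0;n+2]$ has stabilized, so has $c_f(n)$.

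Concretely, the $I$-computable procedure on input $n$ would proceed in three steps. First, use the oracle $I$ to read off the finite set $S := I \cap [0;n+2]$. Second, effectively simulate the construction to locate the least stage $s^{\ast}$ at which $S \subseteq I_{s^{\ast}}$; this $s^{\ast}$ exists and is $I$-computable, since $S$ is finite and the construction is effective. Third, output the current value $c_{f,s^{\ast}}(n)$.

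For correctness I would argue that $c_f(n) = c_{f,s^{\ast}}(n)$. By inspection of the construction, every modification of $\mathcal{A}$ capable of altering $c_f$ is paired with the enumeration of one of the tickets $t_0, t_1, t_2$ into $I$ at the same stage (the reservation step enumerates $t_0$; the first and second attention reactions enumerate $t_1$ and $t_2$ respectively). Now, for any stage $s+1 > s^{\ast}$, no number $\leq n+2$ can newly enter $I$: by monotonicity of $I$ together with $S = I \cap [0;n+2] \subseteq I_{s^{\ast}}$, any such hypothetical newcomer would already have been listed in $S$ and hence in $I_{s^{\ast}}$. By Lemma~\ref{main_verification_lemma}, any $k > n+2$ entering $I$ at stage $s+1$ cannot cause $c_f(n)$ to grow. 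Hence $c_f(n)$ is stable from $s^{\ast}$ onward.

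I expect no real obstacle here; the lemma does essentially all the work. The only point that needs a line of care is the tightness of the link between structural modifications of $\mathcal{A}$ and enumerations into $I$, i.e.\ that $c_f(n)$ is never altered at a stage where no ticket at all enters $I$. This is immediate from the case analysis in the construction, since the only $\mathcal{A}$-altering actions are the reservation step and the first/second attention reactions for the $\mathcal{R}$-requirements, and each of these enumerates a ticket.
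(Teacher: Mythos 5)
Your proposal is correct and follows essentially the same route as the paper: both find (using the oracle) a stage after which $I$ restricted to $[0;n+2]$ has stabilized, then invoke Lemma~\ref{main_verification_lemma} together with the observation that $c_f(n)$ only increases when a ticket enters $I$, and output the number of copies of $\mathcal C_n$ present at that stage. The extra care you take in noting that every $c_f$-increasing action of the construction is paired with a ticket enumeration is exactly the point the paper's proof also relies on.
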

\begin{proof} To compute $c_f(n)$, find $s$ such that $I_s[n+2] = I[n+2]$. By Lemma \ref{main_verification_lemma} and the fact that $c_f(n)$ is increased \emph{only} due to numbers entering $I$, $c_f(n)$ is not increased at stages $>s$ (no additional copies of $\mathcal C_n$ are added to $f_\mathcal{A}$). Return the number of copies of $\mathcal C_n$ in $f_{\mathcal A_s}$.
\end{proof}
\begin{restatable}{lemma}{lemmafc}\label{fcopy_lemma}
$f_\mathcal{A} \leq_T c_f$.
\end{restatable}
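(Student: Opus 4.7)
The plan is to use $c_f$ as a \emph{stability detector} for the computable approximation $f_{\mathcal A_s}$. All checks driving the construction (whether some $\mathcal I_e$, $\mathcal J_e$, or $\mathcal R_{\langle e_1,e_2,n\rangle}$ needs attention) are uniformly decidable, so the sequence $s \mapsto (\mathcal A_s, f_{\mathcal A_s})$ is computable without any oracle. The oracle $c_f$ is used only to tell us when $f_{\mathcal A_s}(x)$ has reached its final value.

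First I would classify elements $x \in \omega$ by how they enter $\mathcal A$. Inspection of the construction shows that $x$ is introduced either inside the fresh set $E$ of some initial reservation (step 1) or inside the fresh set $F$ of some PtR action (step 2). In the second case the elements of $F$ are never reserved for any requirement, so they are never placed in the middle interval $\mathcal C$ of a later PtR call; hence $f_{\mathcal A_s}(x)$ is already final at the stage $s_0$ when $x$ first appears, and the procedure simply returns $f_{\mathcal A_{s_0}}(x)$ without consulting the oracle.

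The substantive case is $x \in E$ for some reservation of $\mathcal R_i$ with tickets $(t_0,t_1,t_2)$. The key observation is that $x$ lies in the middle interval $\mathcal C$ of a later PtR call only when $\mathcal R_i$ receives attention on this very reservation; after an invalidation, $\mathcal R_i$ re-reserves with disjoint fresh companions, and $x$ is no longer eligible. By steps 2(i) and 2(ii), each such attention creates a fresh $\mathcal F \cong \mathcal C + \mathcal D$ whose $\mathcal C$-part is isomorphic to $\mathcal C_{t_0} + \mathcal C_{t_1}$, thereby adding one new copy of $\mathcal C_{t_0}$ and one new copy of $\mathcal C_{t_1}$ to $\mathcal A$. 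I would therefore query $c_f(t_0)$ and $c_f(t_1)$, simulate the construction until the first stage $s^* \geq s_0$ at which the current counts $N(s^*,t_0)$ and $N(s^*,t_1)$ of $\mathcal C_{t_0}$- and $\mathcal C_{t_1}$-cycles in $\mathcal A_{s^*}$ equal $c_f(t_0)$ and $c_f(t_1)$ respectively, and then output $f_{\mathcal A_{s^*}}(x)$. Such $s^*$ exists because $N(s,t_j) \to c_f(t_j)$ monotonically, and its arrival is detectable with oracle $c_f$.

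The main obstacle is justifying correctness of this stopping rule: one needs to show that past $s^*$ no event can still alter $f_{\mathcal A_s}(x)$. The argument rests on two structural facts about the construction: tickets are fresh at reservation time, so the pair $(t_0,t_1)$ is tied to this specific reservation of $\mathcal R_i$; and the PtR template guarantees that any later attention of $\mathcal R_i$ on this reservation strictly increases both $N(\cdot,t_0)$ and $N(\cdot,t_1)$. Other requirements' attentions may also contribute to these counts via their own $\mathcal T$/$\mathcal U$, but this only reinforces monotone saturation toward $c_f(t_0)$ and $c_f(t_1)$, so equality of the current counts with $c_f$ certifies that all further attentions of $\mathcal R_i$ on this reservation are blocked. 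Consequently $f_\mathcal A(x) = f_{\mathcal A_{s^*}}(x)$, and the whole procedure is $c_f$-computable, establishing $f_\mathcal A \leq_T c_f$.
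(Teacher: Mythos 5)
Your proposal is correct and follows essentially the same route as the paper's proof: split on whether $x$ enters $\mathcal A$ as a filler in some $\mathcal F$ (value fixed from its first appearance, since only the middle interval of a PtR changes $f$-values) or as a companion in the $E$-block of a reservation with tickets $t_0,t_1,t_2$, and in the latter case use $c_f(t_0),c_f(t_1)$ as a stopping certificate, since each attention of $\mathcal R_i$ on that reservation strictly increases the number of copies of $\mathcal C_{t_0}$ and $\mathcal C_{t_1}$. Your explicit remark that other requirements' pushes can only drive the counts monotonically toward $c_f(t_0),c_f(t_1)$, never past them, is a slightly more careful rendering of the same stopping argument the paper uses.
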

\begin{proof} 
To compute in $c_f$ the value $f_\mathcal{A}(x)$, wait for the earliest $s$ such that $x \in \mathcal A_s$. At stage $s-1$, either some requirement needed attention or some requirement made a reservation.

Assume that some requirement needed attention at stage $s-1$. Hence, $x$ is one of the fresh numbers that have been added to $\mathcal A$ in response the requirement that received attention at stage $s$. These numbers are represented by the gray fragments in Figures \ref{fig:1stattentionAfter}, \ref{fig:2ndattentionAfter}. It is crucial to observe that blocks created in this manner always preserve their structure throughout the construction. It means that when fresh numbers are put in such blocks, they can be pushed to the right (with no in-between insertions) but once they are pushed, they land on positions on which we recreate the behavior of $f$ from their original positions. Therefore, $f_\mathcal{A}(x) = f_{\mathcal{A}_s}(x)$.

Now, assume that some some requirement $\mathcal R_i$ made a reservation with $\langle u,v \rangle$ and tickets $t_0,t_1,t_2$ at stage $s-1$. Hence, $x$ is one of the fresh numbers that have been added to $\mathcal A$ at stage $s$ as elements of the cycles $\mathcal C_{t_0}$ and $\mathcal C_{t_1}$. These numbers are represented by the gray fragment in Figure \ref{fig:reservation}. Note that, later in the construction, $f$ can assume a different behavior on them in response to first or second attention received by $\mathcal R_i$ with reservation $\langle u, v \rangle$, $t_0,t_1,t_2$. However, when we take action in such a situation, $c_f(t_0)$ and $c_f(t_1)$ is increased. Since, by the construction, the behavior of $f$ on $x$ cannot change due to other reason than $\mathcal R_i$ receiving attention with reservation $\langle u,v \rangle$, $t_0,t_1,t_2$ ($x$ can be pushed to the right in other circumstances but then the behavior of $f$ on $x$ is faithfully reproduced). Therefore, to compute $f_\mathcal{A}(x)$ it suffices to ask the oracle for $c_f(t_0)$ (or $c_f(t_1)$) and carry out the construction up to a stage $s_0$ such that $\mathcal A_{s_0}$ contains $c_f(t_0)$ occurrences of cycle $\mathcal C_{t_0}$. Then we are sure that $f_\mathcal{A}(x) = f_{\mathcal{A}_{s_0}}(x)$.
\end{proof}

By Lemmas \ref{lemma:satisfied}, \ref{main_verification_lemma}, \ref{counting_function_lemma} and \ref{fcopy_lemma}: $\mathbf{0} <_T f_\mathcal{A} \leq c_f \leq_T I <_T \mathbf{0}'$. The spectrum of $f$ is not trivial by Proposition \ref{theorem:trivial}. By Theorem \ref{thm:wright}, $DgSp(f)$ contains all c.e. degrees. Since $f_A$ is of non-c.e. degree, $DgSp(f) \neq$ the c.e. degrees. To show that $DgSp(f)\neq$ the $\Delta_2$ degrees, we need
\begin{theorem}[Cooper, Lempp and Watson, \cite{cooper_weak_1989}]\label{thm:cooper_lempp_watson}
Given c.e. sets $U <_T V$ there is a proper d.c.e. set $C$ of properly d.c.e. degree such that $U <_T C <_T V$. 
\end{theorem}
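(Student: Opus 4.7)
The plan is to construct $C$ as the difference $A \setminus B$ of two c.e.\ sets, enumerated by a priority argument of roughly $\mathbf{0}'''$ complexity, with $V$ serving as a permitting oracle so that $C \leq_T V$ is automatic. I would code $U$ directly into a reserved column of $C$ (say, $C(\langle 0, k\rangle) = U(k)$), which immediately gives $U \leq_T C$. All further diagonalization is done on the remaining columns. The four main families of requirements are: $\mathcal{P}_e : \Phi_e^U \neq C$ (to force $U <_T C$); $\mathcal{Q}_e : \Phi_e^C \neq V$ (to force $C <_T V$); and $\mathcal{N}_{n,i,j} : W_n \neq \Phi_i^C \,\vee\, C \neq \Phi_j^{W_n}$ ranging over all c.e.\ sets $W_n$ and pairs of functionals, which together ensure $\deg_T(C)$ is not a c.e.\ degree and hence is properly d.c.e.

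The $\mathcal{P}_e$ and $\mathcal{Q}_e$ requirements are handled by standard Friedberg--Muchnik-with-permitting: a $\mathcal{P}_e$-strategy picks a fresh witness $x$, waits for $\Phi_e^U(x)\!\downarrow = 0$, then enumerates $x$ into $A$ and preserves the $U$-use; a $\mathcal{Q}_e$-strategy picks a follower $y$, waits for $\Phi_e^C(y)\!\downarrow$, and relies on $V$-permission to change $C(y)$ at a stage where $V$ changes below the relevant use — the hypothesis $U <_T V$ guarantees infinitely many permissions not available from $U$, so these strategies win without spoiling coding or the $V$-computability of the construction.

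The central obstacle is the $\mathcal{N}_{n,i,j}$-family, which is exactly the content of Cooper's original "properly d.c.e." argument. The strategy selects a witness $x$ and waits for a stage $s$ at which $\Phi_{j,s}^{W_{n,s}}(x)\!\downarrow = 0$ with use $u$, while $\Phi_{i,s}^{C_s}\!\upharpoonright u = W_{n,s}\!\upharpoonright u$; it then \emph{enumerates} $x$ into $A$ (so $x$ enters $C$). If the hypothesized equivalence $W_n \equiv_T C$ via $(\Phi_i,\Phi_j)$ is to survive, $W_n$ must subsequently change below $u$ to correct $\Phi_j^{W_n}(x)$; as soon as it does, the strategy \emph{extracts} $x$ by enumerating it into $B$. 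This second-chance move — unavailable to a pure c.e.\ construction — forces either $\Phi_i^C \neq W_n$ (because $C$'s toggling induces a trace incompatible with $W_n$'s settled values on the use) or $\Phi_j^{W_n}(x) \neq C(x)$ after the extraction. Arranging these enumerate/extract cycles so that they respect coding of $U$ and do not injure higher-priority $\mathcal{P}$- and $\mathcal{Q}$-strategies requires a priority tree whose nodes guess, for each $(n,i,j)$, whether $\Phi_i^C$ converges on the relevant uses infinitely often and whether $W_n$ eventually stabilizes.

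The verification proceeds along the true path of the tree: each requirement is injured only finitely often, so all are satisfied; $C$ is genuinely d.c.e.\ (not c.e.) because any witness used by an active $\mathcal{N}$-strategy is enumerated into and later removed from $C$; $U \leq_T C \leq_T V$ by coding and $V$-permitting, with strict inequalities supplied by the $\mathcal{P}$- and $\mathcal{Q}$-requirements; and $\deg_T(C)$ is not a c.e.\ degree because if some c.e.\ $W_n$ satisfied $W_n = \Phi_i^C$ and $C = \Phi_j^{W_n}$, this would contradict $\mathcal{N}_{n,i,j}$. Hence $U <_T C <_T V$ with $C$ a proper d.c.e.\ set of properly d.c.e.\ degree, as required.
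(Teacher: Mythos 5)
This statement is not proved in the paper at all: it is imported verbatim as a black-box result of Cooper, Lempp and Watson \cite{cooper_weak_1989}, so there is no in-paper proof to compare yours against. Judged on its own terms, your sketch reproduces the correct overall architecture of the Cooper--Lempp--Watson weak-density theorem: the requirement families, the coding of $U$ into a c.e.\ column for $U\leq_T C$, $V$-permitting for $C\leq_T V$, and, crucially, Cooper's enumerate-then-extract move for the $\mathcal{N}_{n,i,j}$-requirements, which is indeed the heart of ``properly d.c.e.\ degree.'' (One small slip there: after extraction the win is via $\Phi_i^C\upharpoonright u = W_{n,s}\upharpoonright u \neq W_n\upharpoonright u$, since $W_n$ is c.e.\ and its change below $u$ is permanent; the disagreement $\Phi_j^{W_n}(x)\neq C(x)$ is the win in the case where $W_n$ \emph{never} changes and no extraction occurs.)

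There are, however, two genuine gaps. First, your $\mathcal{Q}_e$-strategy is wrong as stated: you cannot satisfy $\Phi_e^C\neq V$ by picking a follower $y$ and ``changing $C(y)$ under $V$-permission,'' because $V$ is a fixed given set --- you have no control over $V(y)$, and toggling $C(y)$ gives you no predictable disagreement. The requirement $V\not\leq_T C$ must be met by a Sacks-style preservation strategy: maintain restraints protecting agreeing computations $\Phi_e^C\upharpoonright \ell$, and argue that if the length of agreement were unbounded then $V$ would be computable from $U$ (plus finitely much information), contradicting $U<_T V$. Second, the actual content of \cite{cooper_weak_1989} is resolving the three-way conflict between extraction (needed by $\mathcal{N}$), $V$-permitting of both the enumeration \emph{and} the later extraction (needed for $C\leq_T V$), and the $\mathcal{Q}$-preservation restraints; your sketch acknowledges that a priority tree is needed but asserts rather than exhibits the resolution. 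As a roadmap your proposal is faithful to the known proof, but it is not yet a proof.
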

Assume, for a contradiction, that $DgSp(f)$ consists of the $\Delta_2$ degrees. By Theorem \ref{thm:cooper_lempp_watson}, $DgSp(f) \cap \{\deg(A): c_f \leq_T A \leq_T \mathbf{0}'\}$ contains a properly d.c.e. degree. However, by Proposition \ref{thm:nikolay}, $DgSp(f) \cap \{\deg(A): c_f \leq_T A \leq_T \mathbf{0}'\}$ contains only c.e. degrees. This is a contradiction, so the degree spectrum of $f$ is different then the $\Delta_2$ degrees. This completes the proof.

\section{Conclusions and open questions}
 We have investigated the problem of intrinsic complexity of computable relations on $(\omega,<)$, as measured by their degree spectra, in the restricted setting of graphs of unary total computable functions. It has been known that possible candidates for intrinsic complexities of such functions include three sets consisting of precisely: the computable degree, all c.e. degrees, and all $\Delta_2$ degrees. Imposing certain structural contraints on such functions has led us to the notions of block functions (Definition \ref{def:block_function}) and a broader class of quasi-block functions (Definition \ref{def:quasi_block_function}). Non-quasi-block functions have intrinsic complexity equal to the c.e. degrees (Theorem \ref{theorem:non_quasi_block}) which redirects all focus to quasi-block functions. We have obtained several results on this class, most prominently the one on block-functions with finitely many types (Theorem \ref{theorem:block-functions}) showing that their intrinsic complexity is either trivial of equal to the $\Delta_2$ degrees. However, the most surprising result is that on an unusual degree spectrum (Theorem \ref{theorem:unusual_spectrum}) which proves the existence of a block function having intrinsic complexity different from the already known three candidates. To the best of our knowledge, this theorem answers Question 6.2 from \cite{wright_degrees_2018} formulated by Wright who asked whether there are relations on $(\omega,<)$ with other degree spectra (than the three known candidates). Harrison-Trainor obtained a related result though for a different relation. However, for his relation it is not known whether its spectrum is intermediate (see Section \ref{sec:introduction} for details, as well as \cite{harrison-trainor_degree_2018}). 

A few questions arise immediately. Although we have been able to obtain some results on computable block functions with infinitely many types, the spectrum problem for such functions remains largely unsolved. Even for the function constructed in Theorem \ref{theorem:unusual_spectrum}, the exact contents of its spectrum are unknown. We finish the paper with an open question: are there infinitely many spectra of unary total computable functions on $(\omega,<)$?
\bibliographystyle{plain}
\bibliography{ref}

\begin{thebibliography}{10}

\bibitem{ash_computable_2000}
Chris~J Ash and Julia Knight.
\newblock {\em Computable structures and the hyperarithmetical hierarchy}.
\newblock Elsevier, 2000.

\bibitem{chubb_degree_2009}
Jennifer Chubb, Andrey Frolov, and Valentina Harizanov.
\newblock Degree spectra of the successor relation of computable linear
  orderings.
\newblock {\em Archive for Mathematical Logic}, 48(1):7--13, 2009.

\bibitem{cooper_weak_1989}
S.~Barry Cooper, Steffen Lempp, and Philip Watson.
\newblock Weak density and cupping in the d-r.e. degrees.
\newblock {\em Israel Journal of Mathematics}, 67(2):137--152, 1989.

\bibitem{downey_degree_2009}
Rod Downey, Bakhadyr Khoussainov, Joseph~S Miller, and Liang Yu.
\newblock Degree spectra of unary relations on $(\omega,\leq)$.
\newblock In {\em Logic, {Methodology} and {Philosophy} of {Science}:
  {Proceedings} of the {Thirteenth} {International} {Congress}}, pages 35--55.
  College Publications, 2009.
\newblock Publisher: Citeseer.

\bibitem{epstein_degrees_1979}
R.L. Epstein.
\newblock {\em Degrees of {Unsolvability}: {Structure} and {Theory}}.
\newblock Lecture {Notes} in {Mathematics}. Springer Berlin Heidelberg, 1979.

\bibitem{fokina_comp_model_20l4}
Ekaterina~B. Fokina, Valentina Harizanov, and Alexander Melnikov.
\newblock Computable model theory.
\newblock In R.~Downey, editor, {\em Turing's legacy: {D}evelopments from
  {T}uring's ideas in logic}, volume~42 of {\em Lecture Notes in Logic}, pages
  124--194. Cambridge University Press, Cambridge, 2014.

\bibitem{harizanov_degree_1987}
V.S. Harizanov.
\newblock {\em Degree spectrum of a recursive relation on a recursive
  structure}.
\newblock PhD thesis, University of Wisconsin-Madinson, 1987.

\bibitem{harrison-trainor_degree_2018}
Matthew Harrison-Trainor.
\newblock Degree spectra of relations on a cone.
\newblock {\em Memoirs of the American Mathematical Society}, 253(1208):1--120,
  2018.

\bibitem{hirschfeldt_degree_2000}
Denis~R. Hirschfeldt.
\newblock Degree spectra of relations on computable structures.
\newblock {\em Bulletin of Symbolic Logic}, 6(2):197--212, 2000.

\bibitem{knoll_degree_2009}
Carolyn~Alexis Knoll.
\newblock Degree spectra of unary relations on $\omega$ and $\zeta$.
\newblock Master's thesis, University of Waterloo, 2009.

\bibitem{montalban_computable_2021}
Antonio Montalbán.
\newblock {\em Computable structure theory: {Within} the arithmetic}.
\newblock Cambridge University Press, 2021.

\bibitem{moses_relations_1986}
Michael Moses.
\newblock Relations {Intrinsically} {Recursive} in {Linear} {Orders}.
\newblock {\em Mathematical Logic Quarterly}, 32(25-30):467--472, 1986.

\bibitem{richter_degrees_1981}
Linda~Jean Richter.
\newblock Degrees of structures.
\newblock {\em Journal of Symbolic Logic}, 46(4):723--731, 1981.
\newblock Publisher: Cambridge University Press.

\bibitem{shapiro_acceptable_1982}
Stewart Shapiro.
\newblock Acceptable notation.
\newblock {\em Notre Dame Journal of Formal Logic}, 23(1):14--20, January 1982.

\bibitem{soare_recursively_1987}
Robert~I. Soare.
\newblock {\em Recursively {Enumerable} {Sets} and {Degrees}}.
\newblock Springer-Verlag New York, Inc., New York, NY, USA, 1987.

\bibitem{wright_degrees_2018}
Matthew Wright.
\newblock Degrees of relations on ordinals.
\newblock {\em Computability}, 7(4):349--365, 2018.
\newblock Publisher: IOS Press.

\end{thebibliography}
\end{document}